\theoremstyle{plain}
\newtheorem{thm}{Theorem}[section]
\newtheorem{lem}[thm]{Lemma}
\newtheorem{cor}[thm]{Corollary}
\theoremstyle{definition}
\newtheorem{dfn}[thm]{Definition}
\newtheorem{rem}[thm]{Remark}
\DeclareMathOperator{\Hom}{Hom}
\DeclareMathOperator{\End}{End}
\DeclareMathOperator{\Ext}{Ext}
\DeclareMathOperator{\Tor}{Tor}
\DeclareMathOperator{\modu}{mod}
\DeclareMathOperator{\Modu}{Mod}
\newcommand{\op}{\mathrm{op}}
\DeclareMathOperator{\id}{id}
\newcommand{\oTo}{\xymatrix{ \ar@{^{(}->}[r]|{\mathbf{O}}& }} 
\newcommand{\cTo}{\xymatrix{ \ar@{^{(}->}[r]|{\mathbf{|}}& }} 
\newcommand{\coTo}{\xymatrix{ \ar@{^{(}->}[r]|{\mathbf{O}}|{\mathbf{|}}& }} 
\DeclareMathOperator{\Ker}{ker}
\DeclareMathOperator{\coKer}{coker}
\DeclareMathOperator{\Bild}{Im}
\DeclareMathOperator{\cogen}{cogen}
\DeclareMathOperator{\gen}{gen}
\newcommand{\La}{\Lambda}
\newcommand{\mcC}{\mathcal{C}}
\newcommand{\mcD}{\mathcal{D}}
\newcommand{\mcE}{\mathcal{E}}
\newcommand{\mcF}{\mathcal{F}}
\newcommand{\mcG}{\mathcal{G}}
\newcommand{\mcI}{\mathcal{I}}
\newcommand{\mcM}{\mathcal{M}}
\newcommand{\mcP}{\mathcal{P}}
\newcommand{\mcX}{\mathcal{X}}
\DeclareMathOperator{\add}{add}
\begin{document}

\title{On faithfully balancedness in functor categories}
\author{Julia Sauter}
\address{Julia Sauter\\ Faculty of Mathematics \\
Bielefeld University \\
PO Box 100 131\\
D-33501 Bielefeld }
\email{jsauter@math.uni-bielefeld.de}

\begin{abstract}
This is a generalization of some results of Ma-Sauter from module categories over artin algebras to more general functor categories (and partly to exact categories). In particular, we generalize the definition of a faithfully balanced module to a \emph{faithfully balanced subcategory} and find the generalizations of dualities and characterizations from Ma-Sauter. 
\end{abstract}

\subjclass[2010]{18G99, 18B99, 18G25}
\keywords{faithfully balanced, exact category}

\date{\today}
\maketitle
\section{Introduction}

For an exact category $\mcE$ in the sense of Quillen and a full subcategory $\mcM$
we define categories $\gen_k^{\mcE} (\mcM)$ (and $\cogen^k_{\mcE} (\mcM)$) of $\mcE$ (consisting of objects admitting a certain $k$-presentation in $\mcM$). We also consider the two functors $\Phi (X):= \Hom_{\mcE}(-,X)\lvert_{\mcM}, \Psi (X) := \Hom_{\mcE} (X,-)\lvert_{\mcM}$.

We give the relatively obvious but technical generalizations of results in \cite{MS} related to these categories and functors. If $\mcE$ is a functor category (of some sort) these functors have adjoints and therefore stronger results can be found. We state here two of these: \\
Let $\mcP$ be an essentially small additive category. We denote by $\Modu -\mcP$ the category of contravariant additive functors $\mcP \to (Ab)$ (and we set $\mcP-\Modu:= \Modu-\mcP^{op}$). We write $\modu_k-\mcP$ for the full subcategory which admit a $k$-presentation by finitely generated projectives. We denote by $h \colon \mcP \to \Modu-\mcP$, $P \mapsto h_P =\Hom_{\mcP}(-,P)$ the Yoneda embedding. \\
\emph{Cogen$^1$-duality:} Let $k \in \mathbb{N}_0 \cup \{\infty\}$ and assume now $\mcM \subset \modu_k-\mcP$. We shorten the notation $\cogen^k(\mcM):= \cogen^k_{\modu_k-\mcP}(\mcM) \subset \modu_k-\mcP$. \\
We say $\mcM$ is \textbf{faithfully balanced} if $h_P\in \cogen^1(\mcM)$ for all $P \in \mcP$.

\begin{lem} (cf. Lem. \ref{cogen1Duality}) ($\cogen^1$-duality) If $\mcM$ is faithfully balanced, we denote by $\tilde{\mcM}=\Psi (h_{\mcP})\subset \mcM-\modu_k$, then $\Psi $ defines a contravariant equivalence 
\[ 
\cogen^1_{\modu_1-\mcP}(\mcM) \longleftrightarrow \cogen^1_{\mcM-\modu_1}(\tilde{\mcM})
\]
\end{lem}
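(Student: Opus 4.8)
The plan is to produce an explicit quasi-inverse of $\Psi$ and to reduce the whole statement to the behaviour of $\Psi$ on finitely generated projectives together with the general $\cogen^1$-results established in the earlier part of the paper. Abbreviate $\mcA=\modu_1-\mcP$ and $\mcB=\mcM-\Modu$. By the Yoneda lemma $\Psi(h_P)=\Hom_{\mcA}(h_P,-)\lvert_{\mcM}$ is the evaluation functor $M\mapsto M(P)$ restricted to $\mcM$, so $\tilde{\mcM}$ is precisely the additive subcategory of $\mcB$ of evaluation functors. Define $\Psi^{\vee}\colon\mcB\to\mcA$ on objects by $\Psi^{\vee}(F)(P):=\Hom_{\mcB}(F,\Psi(h_P))$; since $P\mapsto h_P$ is covariant this is contravariant in $P$, hence lies in $\Modu-\mcP$, and it is contravariant in $F$. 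There is a natural transformation $\eta_X\colon X\to\Psi^{\vee}\Psi(X)$, given on $X(P)=\Hom_{\mcA}(h_P,X)$ by $f\mapsto\Psi(f)$, and dually a natural transformation $\ep_F\colon F\to\Psi\Psi^{\vee}(F)$, given by evaluation. It suffices to show: (i) $\Psi$ and $\Psi^{\vee}$ restrict to functors between $\cogen^1_{\modu_1-\mcP}(\mcM)$ and $\cogen^1_{\mcM-\modu_1}(\tilde{\mcM})$; and (ii) $\eta$ is invertible on the first and $\ep$ on the second.

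The first key point is that $\eta$ is invertible on $\add\mcM$: for $M\in\mcM$ the restriction $\Psi(M)\lvert_{\mcM}$ is the corepresentable functor $\Hom_{\mcM}(M,-)$ (because $\mcM$ is full in $\mcA$), so the co-Yoneda lemma gives $\Hom_{\mcB}(\Psi(M),G)\cong G(M)$ for every $G\in\mcB$, and one checks this isomorphism is exactly $\eta_M$. The second key point is that $\eta$ is invertible on all of $\cogen^1_{\modu_1-\mcP}(\mcM)$. Given such an $X$, take a $1$-copresentation $0\to X\to M^0\to M^1$ of the type used in the definition of $\cogen^1$; by the earlier results $\Psi$ carries it to an exact sequence $\Psi(M^1)\to\Psi(M^0)\to\Psi(X)\to0$ in $\mcB$, and $\Hom_{\mcB}(-,\Psi(h_Q))$ applied to the latter yields a left exact sequence. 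Comparing this, via the naturality squares of $\eta$, with the left exact sequence obtained by evaluating $0\to X\to M^0\to M^1$ at $Q$ (evaluation is exact), and using the first point, the five lemma forces $\eta_X$ to be an isomorphism. In particular, since $\mcM$ is faithfully balanced, $\eta_{h_P}$ is an isomorphism, so $\Psi^{\vee}\Psi(h_P)\cong h_P$, the functor $\Psi$ identifies $\tilde{\mcM}$ with the opposite of the category of finitely generated projectives of $\mcA$, and consequently (triangle identity) $\ep$ is invertible on $\add\tilde{\mcM}$.

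It remains to verify (i) and the part of (ii) concerning $\ep$. For $X\in\cogen^1_{\modu_1-\mcP}(\mcM)$ choose a finite projective presentation $h_{P_1}\to h_{P_0}\to X\to0$ in $\mcA$; applying $\Hom_{\mcA}(-,M)$ objectwise in $M\in\mcM$ produces a left exact sequence $0\to\Psi(X)\to\Psi(h_{P_0})\to\Psi(h_{P_1})$, i.e. a $1$-copresentation of $\Psi(X)$ by $\tilde{\mcM}$; evaluating $h_{P_1}\to h_{P_0}\to X\to0$ at $Q$ and using that $\eta_X$ and the $\eta_{h_{P_i}}$ are invertible shows that this copresentation is of the required kind, and the earlier technical results give $\Psi(X)\in\mcM-\modu_1$, so $\Psi(X)\in\cogen^1_{\mcM-\modu_1}(\tilde{\mcM})$. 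Dually, applying $\Psi^{\vee}$ to a $1$-copresentation $0\to F\to\Psi(h_{Q_0})\to\Psi(h_{Q_1})$ of $F\in\cogen^1_{\mcM-\modu_1}(\tilde{\mcM})$ and using $\Psi^{\vee}\Psi(h_{Q_i})\cong h_{Q_i}$ produces an exact sequence $h_{Q_1}\to h_{Q_0}\to\Psi^{\vee}(F)\to0$; the same five-lemma comparison as above, now with $\ep$ in place of $\eta$ and with $\ep$ invertible on $\add\tilde{\mcM}$, shows that $\ep_F$ is an isomorphism and $\Psi^{\vee}(F)\in\cogen^1_{\modu_1-\mcP}(\mcM)$. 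Hence $\Psi$ and $\Psi^{\vee}$ are mutually quasi-inverse contravariant equivalences between the two subcategories.

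The step I expect to be the main obstacle is the exactness bookkeeping: one must pin down "a certain $1$-copresentation in $\mcM$" precisely enough that it is preserved along $\Psi$ and $\Psi^{\vee}$, and one must keep control of which ambient category the images land in, so that $\Psi(X)$ genuinely lies in $\mcM-\modu_1$ and $\Psi^{\vee}(F)$ in $\modu_1-\mcP$ — this is exactly where the hypotheses $\mcM\subset\modu_k-\mcP$, $\tilde{\mcM}\subset\mcM-\modu_k$ and the earlier technical lemmas are genuinely used. A minor but essential subtlety is the role of faithful balancedness: it is needed only to make $\eta_{h_P}$ an isomorphism, i.e. to guarantee that restricting natural transformations of evaluation functors from $\modu_1-\mcP$ down to $\mcM$ creates no new ones, and this uses the full $1$-copresentation $0\to h_P\to M^0\to M^1$ rather than merely an embedding $h_P\hookrightarrow M^0$ with $M^0\in\add\mcM$.
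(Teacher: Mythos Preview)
Your proof is correct and follows essentially the same strategy as the paper: your $\Psi^{\vee}$ is exactly the paper's $\Psi'$, your $\eta,\ep$ are its $\alpha,\alpha'$, and the core mechanism in both proofs is the triangle identities for this contravariant adjunction combined with the description of $\cogen^1$ as ``unit is an isomorphism plus finitely presented image'' (the paper packages the latter as Lemma~\ref{diffdesc}, while you re-derive the needed direction by a five-lemma comparison).

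One small imprecision: in your final paragraph the sequence $h_{Q_1}\to h_{Q_0}\to\Psi^{\vee}(F)\to 0$ coming from the $\tilde{\mcM}$-copresentation of $F$ only shows $\Psi^{\vee}(F)\in\modu_1-\mcP$ and that $\ep_F$ is invertible; it does not by itself produce an $\mcM$-copresentation of $\Psi^{\vee}(F)$, so ``$\Psi^{\vee}(F)\in\cogen^1_{\modu_1-\mcP}(\mcM)$'' does not follow from that display. The genuine dual of your argument for $\Psi(X)\in\cogen^1(\tilde{\mcM})$ uses instead a projective presentation $\Psi(M^1)\to\Psi(M^0)\to F\to 0$ in $\mcM-\modu_1$: applying $\Psi^{\vee}$ and the isomorphisms $\eta_{M^i}$ gives $0\to\Psi^{\vee}(F)\to M^0\to M^1$, and $\ep_F$ invertible then verifies the $\cogen^1$-condition. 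Alternatively, invoke the triangle identity ($\ep_F$ iso $\Rightarrow$ $\eta_{\Psi^{\vee}(F)}$ iso) together with the converse direction of the characterisation. Either fix is routine, so the overall argument stands.
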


\emph{The symmetry principle} states as follows: 
\begin{thm} (cf. Thm. \ref{sym}, Symmetry principle). 
Let $\mcE$ be an exact category with enough projectives $\mcP$ and enough injectives $\mcI$ and $k \geq 1$. The following two statements are equivalent: 
\begin{itemize}
    \item[(1)] $\mcP \subset \cogen^k_{\mcE} (\mcM)$ and $\Phi(I)=\Hom_{\mcE} (-,I)\lvert_{\mcM} \quad \in \modu_k-\mcM$ for every $I \in \mcI$ 
    \item[(2)] $\mcI \subset \gen_k^{\mcE} (\mcM)$ and $\Psi(P)=\Hom_{\mcE} (P,-)\lvert_{\mcM}\quad \in \mcM-\modu_k$ for every $P \in \mcP$
\end{itemize}
\end{thm}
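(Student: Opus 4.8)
**The plan is to prove the two implications (1) $\Rightarrow$ (2) and (2) $\Rightarrow$ (1) separately, and then observe that they are formally dual under the passage $\mcE \rightsquigarrow \mcE^{\mathrm{op}}$, so that a proof of one direction gives the other for free.** Concretely, I would first argue that the statement is self-dual: replacing $\mcE$ by $\mcE^{\mathrm{op}}$ interchanges the roles of $\mcP$ and $\mcI$, turns $\gen_k$ into $\cogen^k$ (and vice versa), turns $\Phi$ into $\Psi$, and turns right $\mcM$-modules into left $\mcM$-modules. Under this dictionary, condition (1) for $\mcE$ becomes condition (2) for $\mcE^{\mathrm{op}}$ (with $\mcM$ regarded as a subcategory of $\mcE^{\mathrm{op}}$). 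Hence it suffices to prove, say, (1) $\Rightarrow$ (2); the converse is (1) $\Rightarrow$ (2) applied to $\mcE^{\mathrm{op}}$. I would state this reduction carefully as the first step, making sure the generalized $\gen_k^{\mcE}(\mcM)$ and $\cogen^k_{\mcE}(\mcM)$ constructions from the introduction are literally dual to each other (this should be built into their definitions in the body of the paper).

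Next, assuming (1), I would prove (2). The key is to take, for a fixed injective $I \in \mcI$, an injective copresentation combined with the $\cogen^k_{\mcE}(\mcM)$-condition on $\mcP$. More precisely, for each $P \in \mcP$ I want to produce an exact $\mcM$-coresolution witnessing $I \in \gen_k^{\mcE}(\mcM)$, i.e., a suitable $k$-copresentation of $I$ by objects of $\mcM$, and simultaneously show that $\Psi(P) = \Hom_{\mcE}(P,-)\lvert_{\mcM}$ admits a $k$-presentation by representable functors in $\mcM\text{-}\modu$. The mechanism is the usual interplay: apply $\Hom_{\mcE}(P,-)$ to a chosen $\mcM$-coresolution of $I$; since $P$ is projective, $\Hom_{\mcE}(P,-)$ is exact on short exact sequences, so the complex of $\mcM$-modules obtained by applying $\Phi$ (or its "other side") to the coresolution stays exact; then dualize using the hypothesis that $\Phi(I) \in \modu_k\text{-}\mcM$. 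The combinatorial bookkeeping — matching the index $k$ on the presentation side with the index $k$ on the coresolution side, and checking that "finitely generated projective in $\mcM\text{-}\modu$" corresponds under Yoneda to objects of $\mcM$ — is the content of this step, and it parallels the module-category argument of Ma--Sauter \cite{MS} closely.

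The main obstacle I expect is not any single deep idea but rather the careful verification that the duality/adjunction machinery available for module categories still functions in this exact-category setting: specifically, that $\Phi$ and $\Psi$ restricted to the relevant $\gen_k$/$\cogen^k$ subcategories interact correctly with the representable functors, and that "enough projectives and enough injectives" suffices (in place of the stronger functor-category hypotheses that give honest adjoints) to run the copresentation/presentation transfer. I would isolate as a lemma the statement that, for $P$ projective and $0 \to I \to M^0 \to M^1 \to \cdots$ an $\mcM$-coresolution, applying $\Hom_{\mcE}(P,-)$ yields an exact sequence, and dually for $I$ injective; then (2) follows by feeding the hypothesis $\Phi(I) \in \modu_k\text{-}\mcM$ through this lemma $k$ times. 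If the body of the paper has already recorded the generalizations of the relevant lemmas from \cite{MS} (which the introduction promises), then the proof is essentially an assembly of those pieces together with the $\mcE \leftrightarrow \mcE^{\mathrm{op}}$ symmetry; otherwise the bulk of the work is re-establishing those lemmas in this generality.
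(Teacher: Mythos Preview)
Your duality reduction is sound, but the direct argument you sketch for (1) $\Rightarrow$ (2) has a genuine gap, and the paper takes a different route entirely.

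The paper does not prove one implication and dualize. Instead it invokes the Corollary immediately preceding the theorem, which characterizes $\gen_k^{\mcE}(\mcM)$ and $\cogen^k_{\mcE}(\mcM)$ via tensor products and $\Tor$: $X\in\gen_k^{\mcE}(\mcM)$ iff $\Phi(X)\in\modu_k\text{-}\mcM$, the composition map $\Phi(X)\otimes_{\mcM}\Psi(P)\to\Hom_{\mcE}(P,X)$ is an isomorphism, and $\Tor^i_{\mcM}(\Phi(X),\Psi(P))=0$ for $1\le i<k$ and all $P\in\mcP$; dually for $\cogen^k$. Applying this with $X=I$ (resp.\ $X=P$), both (1) and (2) unwind to the \emph{same} symmetric condition: $\Psi(P)\in\mcM\text{-}\modu_k$, $\Phi(I)\in\modu_k\text{-}\mcM$, the map $\Phi(I)\otimes_{\mcM}\Psi(P)\to\Hom_{\mcE}(P,I)$ is an isomorphism, and $\Tor^i_{\mcM}(\Phi(I),\Psi(P))=0$ for $1\le i<k$. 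Equivalence is then immediate.

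Your sketch, by contrast, never confronts the actual difficulty: from the hypothesis $\Phi(I)\in\modu_k\text{-}\mcM$ you get, via Yoneda, a sequence of maps $M_k\to\cdots\to M_0\to I$ in $\mcE$ which becomes exact after applying $\Hom_{\mcE}(M,-)$ for $M\in\mcM$ --- but you must show it is an exact sequence \emph{in $\mcE$}, i.e.\ that $M_0\to I$ is a deflation and the successive kernels exist and sit in conflations. Your proposal repeatedly speaks of an ``$\mcM$-coresolution of $I$'' when what is needed is an $\mcM$-\emph{resolution}, and the lemma you isolate (``for $P$ projective and $0\to I\to M^0\to\cdots$ an $\mcM$-coresolution, $\Hom_{\mcE}(P,-)$ stays exact'') presupposes exactly the object whose existence is at stake. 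A direct argument is possible --- e.g.\ a double-complex comparison using the $\mcM$-coresolution of $P$ from $P\in\cogen^k(\mcM)$, injectivity of $I$, and the embedding $\mathbb{P}\colon\mcE\to\Modu\text{-}\mcP$ to reflect exactness --- but that is not what you wrote, and it is precisely the content packaged into the paper's tensor/$\Tor$ Corollary.
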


A nice special case: Assume additionally that $\mcE$ is a Hom-finite $K$-category for a field $K$ and $\mcM = \add (M)$ for an object $M \in \mcE$. Then the following two statements are equivalent: 
\begin{itemize}
    \item[(1)] $\mcP \subset \cogen^k_{\mcE} (\mcM)$  
    \item[(2)] $\mcI \subset \gen_k^{\mcE} (\mcM)$ 
\end{itemize}

Since: If we set $\La=\End_{\mcE}(M)$, then $\modu_k-\mcM$, $\mcM-\modu_k$ can be identified with finite-dimensional (left and right) modules over $\La$ and $\Phi (I)=\Hom_{\mcE} (M,I), \Psi (P) = \Hom_{\mcE} (P,M)$ are by assumption finite-dimensional $\La$-modules.

\section{In additive categories}
Here we want to extend Yoneda's embedding to a bigger subcategory: 
Let $\mcC$ be an additive category and $\mcM$ an essentially small  full additive subcategory. A right $\mcM$-module is a contravariant additive functor from $\mcM$ into abelian groups. We denote by $\Modu-\mcM$ the category of all right $\mcM$-modules. This is an abelian category. We have the fully faithful (covariant) Yoneda embedding $\mcM \to \Modu-\mcM$ defined by $M \mapsto \Hom_{\mcM}(-,M)$. Clearly, we can extend this functor to a functor $\Phi \colon \mcC \to \Modu-\mcM, \; \Phi (X):= \Hom_{\mcC}(-,X)\lvert_{\mcM} =(-,X)\lvert_{\mcM}$ where the last notation is our shortage for the Hom functor. 
The aim of this section is to define a subcategory $\mcM \subset \mcG \subset \mcC$ such that $\Phi\lvert_{\mcG}$ is fully faithful. \\
We define a full subcategory of $\mcC$ as follows
\[ 
\gen_1^{\add} (\mcM):= \Bigg\{ Z \in \mcC\mid 
\begin{aligned}
&\exists M_1 \xrightarrow{f} M_0 \xrightarrow{g}Z, \; M_i \in \mcM, \; g=\coKer (f) \text{ is an epim. } \\
&(M,M_1) \to (M,M_0) \to (M,Z) \to 0 \text{ ex. seq. of ab. groups }\forall M\in \mcM
\end{aligned}
\Bigg\}
\]
We observe that $g=\coKer (f)$ and $g$ an epimorphism is equivalent to  that we have an exact sequence of $\mcC^{op}$-modules 
\[ 
0 \to (Z,-) \to (M_0,-) \to (M_1, -)
\]
Furthermore the second line in the definition is equivalent to an exact sequence in $\Modu-\mcM$ 
\[ (-,M_1) \to (-,M_0) \to (-,Z) \lvert_{\mcM} \to 0.\]
Dually, we define $\cogen_{\add}^1 (\mcM):= (\gen_1^{\add}(\mcM^{op}))^{op}$ where $\mcM^{op}$ is considered as a full additive subcategory of $\mcC^{op}$. 

\begin{lem}\label{extYon}
\begin{itemize}
    \item[(1)] The functor $\gen_1^{\add} (\mcM) \to \Modu-\mcM$ defined by $Z\mapsto (-,Z)\lvert_{\mcM}$ is fully faithful. We even have for every $Z \in \gen_1^{\add}(\mcM), C \in \mcC$ a natural isomorphism 
    \[ 
    \Hom_{\mcC} (Z, C) \to \Hom_{\Modu-\mcM} ( (-,Z)\lvert_{\mcM}, (-,C)\lvert_{\mcM})
    \]
    
    \item[(2)] The functor $\cogen^1_{\add}(\mcM) \to \Modu-\mcM^{op}$ defined by $Z\mapsto  (Z,-)\lvert_{\mcM}$ is fully faithful. 
     We even have for every $Z\in \cogen^1_{\add}(\mcM ), C \in \mcC$ a natural isomorphism 
    \[ 
    \Hom_{\mcC} (C, Z) \to \Hom_{\Modu-\mcM^{op}} ( (Z,-)\lvert_{\mcM}, (C,-)\lvert_{\mcM})
    \]
\end{itemize}
\end{lem}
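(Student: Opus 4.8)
The plan is to prove part (1) directly and then obtain part (2) by applying (1) to the opposite category $\mcC^{op}$ with the full additive subcategory $\mcM^{op}$, using that $\cogen^1_{\add}(\mcM) = (\gen_1^{\add}(\mcM^{op}))^{op}$ and $\Modu-\mcM^{op}$ is by construction the module category over $\mcM^{op}$. So the work is entirely in (1). Fix $Z \in \gen_1^{\add}(\mcM)$ with a presentation $M_1 \xrightarrow{f} M_0 \xrightarrow{g} Z$ as in the definition, and fix an arbitrary $C \in \mcC$. The map in question sends $\alpha \colon Z \to C$ to the induced natural transformation $(-,\alpha)\lvert_{\mcM} \colon (-,Z)\lvert_{\mcM} \to (-,C)\lvert_{\mcM}$; naturality in $C$ is immediate from functoriality of $\Hom$, so the content is bijectivity for each fixed $Z$ and $C$.

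The key step is to compare two exact sequences. On one side, apply the contravariant functor $\Hom_{\mcC}(-,C)$ to $M_1 \xrightarrow{f} M_0 \xrightarrow{g} Z$; since $g = \coKer(f)$, this yields an exact sequence
\[
0 \to \Hom_{\mcC}(Z,C) \xrightarrow{(g,C)} \Hom_{\mcC}(M_0,C) \xrightarrow{(f,C)} \Hom_{\mcC}(M_1,C).
\]
On the other side, by the second line of the definition of $\gen_1^{\add}(\mcM)$ we have an exact sequence $(-,M_1) \to (-,M_0) \to (-,Z)\lvert_{\mcM} \to 0$ in $\Modu-\mcM$; applying the left-exact functor $\Hom_{\Modu-\mcM}(-, (-,C)\lvert_{\mcM})$ gives an exact sequence
\[
0 \to \Hom_{\Modu-\mcM}((-,Z)\lvert_{\mcM}, (-,C)\lvert_{\mcM}) \to \Hom_{\Modu-\mcM}((-,M_0)\lvert_{\mcM}, (-,C)\lvert_{\mcM}) \to \Hom_{\Modu-\mcM}((-,M_1)\lvert_{\mcM}, (-,C)\lvert_{\mcM}).
\]
Now by the Yoneda lemma in $\Modu-\mcM$, for $M_i \in \mcM$ we have natural isomorphisms $\Hom_{\Modu-\mcM}((-,M_i)\lvert_{\mcM}, (-,C)\lvert_{\mcM}) \cong (-,C)\lvert_{\mcM}(M_i) = \Hom_{\mcC}(M_i,C)$. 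These isomorphisms are compatible with the maps induced by $f$ and $g$ (again by naturality of Yoneda), so the two displayed exact sequences are isomorphic in their last two terms via a commuting square, and hence their kernels—the first terms—are canonically identified. Unwinding this identification shows precisely that $\alpha \mapsto (-,\alpha)\lvert_{\mcM}$ is the resulting isomorphism $\Hom_{\mcC}(Z,C) \xrightarrow{\sim} \Hom_{\Modu-\mcM}((-,Z)\lvert_{\mcM}, (-,C)\lvert_{\mcM})$.

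Taking $C \in \gen_1^{\add}(\mcM)$ (in particular $C \in \mcM$, where it specializes to ordinary Yoneda) then yields that $Z \mapsto (-,Z)\lvert_{\mcM}$ is fully faithful on $\gen_1^{\add}(\mcM)$, giving the first sentence of (1). The main obstacle I anticipate is purely bookkeeping: one must check carefully that the Yoneda identifications intertwine the map "compose with $f$" on the module side with the map $(f,C)$ on the $\mcC$ side (and likewise for $g$), so that the square genuinely commutes; this is routine naturality but must be spelled out to conclude that the induced isomorphism on kernels is exactly the natural transformation map and not merely some abstract bijection. There is also the minor point that $g$ being an epimorphism is not actually needed for the argument above—only $g = \coKer(f)$—but it is harmless to keep it as part of the hypothesis, and it is what makes the dual statement in (2) match the definition of $\cogen^1_{\add}(\mcM)$.
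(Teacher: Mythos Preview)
Your proof is correct and follows essentially the same approach as the paper: compare the left-exact sequence obtained from $g=\coKer(f)$ with the one obtained by applying $\Hom_{\Modu-\mcM}(-,(-,C)\lvert_{\mcM})$ to the right-exact presentation of $(-,Z)\lvert_{\mcM}$, use Yoneda to identify the two rightmost terms, and conclude by the induced isomorphism on kernels. Your additional remarks on naturality bookkeeping and on the epimorphism hypothesis being unnecessary are valid observations that the paper does not make explicit.
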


\begin{proof}
We only prove (1), the second statement follows by passing to opposite categories. We consider the functor $\Phi \colon \mcC \to \Modu-\mcM$ defined by $\Phi (X):=(-,X)\lvert_{\mcM}$. Since $Z\in \gen_1^{\add}(\mcM)$ we an exact sequences  
\[ 
0 \to (Z,C) \to (M_0,C)\to (M_1, C) \quad \text{of ab. groups}
\]
and $\Phi (M_1) \to \Phi (M_0) \to \Phi (Z) \to 0$ in $\Modu-\mcM$. 
By applying $(-, \Phi (C))$ to the second exact sequence we obtain an exact sequence 
\[ 
0 \to (\Phi (Z),\Phi ( C)) \to (\Phi (M_0) ,\Phi (C))\to (\Phi (M_1), \Phi (C)) \quad \text{of ab. groups.}
\]
Since $\Phi$ is a functor, we find a commuting diagram 
\[ 
\xymatrix{
0 \ar[r]& (Z,C) \ar[r]\ar[d]& (M_0,C)\ar[r]\ar[d]& (M_1, C)\ar[d] \\
0 \ar[r]& (\Phi (Z),\Phi ( C)) \ar[r]& (\Phi (M_0) ,\Phi (C))\ar[r]& (\Phi (M_1), \Phi (C))
}
\]
By the Lemma of Yoneda, we have for every $F \in \Modu-\mcM$ and $M \in \mcM$ that $\Hom_{\Modu-\mcM}(\Phi (M), F)=F(M)$. This implies that the maps $(M_i, C) \to (\Phi (M_i), \Phi (C))$ are isomorphisms of groups. and therefore, the induced map on the kernels is an isomorphism. 
\end{proof}

\begin{rem} \label{remNotSmall}
If $\mcM$ is not essentially small, $\Hom_{\mcM-\Modu}(F,G)$ is not necessarily a set. But if one passes to the full subcategory of finitely presented $\mcM$-modules $\modu_1-\mcM$, this set-theoretic issue does not arise: 
Observe that $Z\mapsto (-,Z)\lvert_{\mcM}$ defines by definition a covariant functor 
\[ 
\Phi\colon \gen_1^{\add}(\mcM) \to \modu_1-\mcM, 
\]
the same proof as before shows that this is fully faithful. Similarly, the functor $Z \mapsto (Z,-)\lvert_{\mcM}$ defines a fully faithful contravariant functor 
\[ \Psi \colon \cogen^1_{\add}(\mcM) \to \modu_1-\mcM^{op}. \] 
\end{rem}

\section{In exact categories}

This section is a generalization of results from \cite{MS}. 
For exact categories we have subcategories of $\cogen^1_{\add}$ such that $\Psi$ induces isomorphisms on (some) extension groups (cf. Lemma \ref{isoOnExt}).\\
Given an exact category $\mcE$ with a full additive subcategory $\mcM$, we define $\cogen^k_{\mcE} (\mcM) \subset \mcE$ to be the full subcategory of all objects $X$ such that there is an exact sequence 
\[ 
0 \to X \to M_0 \to \cdots \to M_k\to Z \to 0
\]
with $M_i \in \mcM, 0\leq i \leq k$ such that for every $M \in \mcM$ the sequence 
\[ 
\Hom_{\mcE} (M_k,M) \to \cdots \to \Hom_{\mcE} (M_0,M) \to \Hom_{\mcE}(X,M) \to 0
\]
is an exact sequence of abelian groups. \\
We define $\gen_k^{\mcE}(\mcM)$ to be the full additive category of $\mcE$ given by all $X$ such that there is an exact sequence \[ 
0 \to Z \to M_k \to \cdots \to M_0 \to X \to 0
\]
with $M_i \in \mcM, 0\leq i\leq k$ such that for every $M \in \mcM$ we have an exact sequence 
\[ 
\Hom_{\mcE}(M,M_k) \to \cdots \to \Hom_{\mcE}(M, M_0) \to \Hom_{\mcE} (M, X) \to 0
\]
of abelian groups. \\
If it is clear from the context in which exact category we are working, then we leave out the index $\mcE$ and just write $\cogen^k(\mcM)$ and $\gen_k(\mcM)$. 
\begin{rem} \label{ffcogenk}
Observe that $\cogen^k_{\mcE}(\mcM)\subset \cogen^1_{\add}(\mcM)$, $\gen_k^{\mcE} (\mcM) \subset \gen_1^{\add} (\mcM)$ for $k\geq 1$ and therefore the functor $\Psi\colon X \mapsto (X,-)\lvert_{\mcM}$ (resp. $\Phi \colon X \mapsto (-,X)\lvert_{\mcM}$) is fully faithful on $\cogen^k_{\mcE}(\mcM)$ (resp. on $\gen_k^{\mcE} (\mcM)$) by Lemma \ref{extYon} and Remark \ref{remNotSmall}. 
\end{rem}

\begin{rem} Let $k \geq 1$. We denote by $\modu_k-\mcM$ the category of $\mcM$-modules  which admit a $k$-presentation (indexed from $0$ to $k$) by finitely presented projectives. For $F \in \modu_k-\mcM$ the Ext-groups $\Ext^i_{\mcM-\Modu}(F,G)$ with $0\leq i < k$ are sets. \\
If $X\in \cogen^k_{\mcE}(\mcM)$, then we have $\Psi (X)= (X,-)\lvert_{\mcM}\;\in \modu_k-\mcM^{op} (=: \mcM-\modu_k)$. \\
If $Y \in \gen_k^{\mcE}(\mcM)$, then we have $\Phi (Y) =(-,Y)\lvert_{\mcM} \; \in \modu_k-\mcM$.
\end{rem}

Since we are now working in exact categories, we observe the following isomorphisms on extension groups:
\begin{lem} \label{isoOnExt}
Let $k\geq 1$. 
\begin{itemize}
    \item[(a)] If $X \in \cogen^k_{\mcE} (\mcM)$, then the functor $Z \mapsto \Psi (Z)=(Z,-)\lvert_{\mcM}$ induces a well-defined natural isomorphism of abelian groups
    \[ 
    \Ext^i_{\mcE} (Y,X) \to \Ext^i_{\mcM-\Modu} ( \Psi (X), \Psi (Y)), \quad 0 \leq i <k 
    \]
    for all $Y \in \bigcap_{1\leq i <k} \Ker \Ext^i_{\mcE} (-, \mcM)$.
     \item[(b)] If $Y \in \gen_k^{\mcE} (\mcM)$, then the functor $Z \mapsto \Phi (Z)=(-,Z)\lvert_{\mcM}$ induces a well-defined natural isomorphism of abelian groups 
    \[ 
    \Ext^i_{\mcE} (Y,X) \to \Ext^i_{\Modu - \mcM} ( \Phi (Y), \Phi (X)), \quad 0 \leq i <k 
    \]
    for all $X \in \bigcap_{1\leq i <k} \Ker \Ext^i_{\mcE} ( \mcM , -)$.
\end{itemize}

\end{lem}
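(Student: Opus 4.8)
The plan is to reduce everything to the classical dimension-shifting argument for Ext, using the explicit $k$-presentations built into the definitions of $\cogen^k_\mcE(\mcM)$ and $\gen_k^\mcE(\mcM)$ together with the fact that $\Psi$ (resp.\ $\Phi$) is already known to be fully faithful on these subcategories by Remark \ref{ffcogenk}. I treat (a); statement (b) then follows by passing to the opposite category, replacing $\mcE$ by $\mcE^{\op}$, $\mcM$ by $\mcM^{\op}$, and interchanging the roles of $\gen$ and $\cogen$.

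First, fix $X \in \cogen^k_\mcE(\mcM)$ with a chosen exact sequence $0 \to X \to M_0 \to \cdots \to M_{k} \to Z \to 0$ in $\mcE$, which after applying $\Hom_\mcE(-,M)$ for $M\in\mcM$ stays exact at the first $k$ spots; equivalently, $\Psi(X)$ has the $k$-presentation $(M_{k},-)|_\mcM \to \cdots \to (M_0,-)|_\mcM \to \Psi(X)\to 0$ by finitely generated projective $\mcM^{\op}$-modules. Break this long sequence into short exact sequences $0 \to X \to M_0 \to X_1 \to 0$, $0 \to X_1 \to M_1 \to X_2 \to 0$, $\dots$, noting that each $X_j$ lies in $\cogen^{k-j}_\mcE(\mcM)$ and that each short sequence is $\Hom_\mcE(-,M)$-exact, hence is carried by $\Psi$ to a short exact sequence $0 \to \Psi(X_{j+1}) \to (M_j,-)|_\mcM \to \Psi(X_j) \to 0$ of $\mcM^{\op}$-modules with projective middle term. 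Second, fix $Y \in \bigcap_{1\le i<k}\Ker\Ext^i_\mcE(-,\mcM)$. The base case $i=0$ is exactly the full-faithfulness of $\Psi$ on $\cogen^k_\mcE(\mcM)$ (Lemma \ref{extYon}, Remark \ref{ffcogenk}), giving $\Hom_\mcE(Y,X)\isoto\Hom_{\mcM-\Modu}(\Psi(X),\Psi(Y))$, naturally in $Y$ and $X$.

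For the inductive step, compare the two long exact Ext-sequences obtained by applying $\Ext^\bullet_\mcE(Y,-)$ to $0\to X\to M_0\to X_1\to 0$ and $\Ext^\bullet_{\mcM-\Modu}(-,\Psi(Y))$ to $0\to\Psi(X_1)\to (M_0,-)|_\mcM\to\Psi(X)\to 0$, linked by the natural maps induced by $\Psi$. The key points are: (i) for $1\le i<k$ one has $\Ext^i_\mcE(Y,M_0)=0$ by hypothesis on $Y$, and $\Ext^i_{\mcM-\Modu}((M_0,-)|_\mcM,\Psi(Y))=0$ because $(M_0,-)|_\mcM$ is a finitely generated projective $\mcM^{\op}$-module — so both long exact sequences degenerate into the connecting isomorphisms $\Ext^i_\mcE(Y,X_1)\isoto\Ext^{i+1}_\mcE(Y,X)$ and $\Ext^i_{\mcM-\Modu}(\Psi(X_1),\Psi(Y))\isoto\Ext^{i+1}_{\mcM-\Modu}(\Psi(X),\Psi(Y))$ for $1\le i\le k-2$, plus the low-degree piece $\Hom_\mcE(Y,X_1)/\!\!\sim \;\to \Ext^1_\mcE(Y,X)$ matched against the analogous map on the module side; (ii) $X_1\in\cogen^{k-1}_\mcE(\mcM)$ and $Y\in\bigcap_{1\le i<k-1}\Ker\Ext^i_\mcE(-,\mcM)$, so the inductive hypothesis applies to give $\Ext^i_\mcE(Y,X_1)\isoto\Ext^i_{\mcM-\Modu}(\Psi(X_1),\Psi(Y))$ for $0\le i<k-1$ (for $i=0$ one must also check that the identification $\Hom_\mcE(Y,X_1)\to\Hom_{\mcM-\Modu}(\Psi(X_1),\Psi(Y))$ descends correctly modulo the image of $\Hom_\mcE(Y,M_0)$, which follows from functoriality of $\Psi$ on the square). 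Chasing the ladder diagram and invoking the five lemma at each rung then yields $\Ext^i_\mcE(Y,X)\isoto\Ext^i_{\mcM-\Modu}(\Psi(X),\Psi(Y))$ for all $0\le i<k$, naturally in $Y$; naturality in $X$ is checked by functoriality of the whole construction in the chosen presentation, together with the standard fact that the connecting maps are independent of the presentation up to canonical isomorphism.

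The main obstacle I anticipate is bookkeeping at the two ends of the Ext-ladder rather than in its interior: verifying that the low-degree segment (the transition from $\Hom$ to $\Ext^1$) of the two long exact sequences really is compatible under $\Psi$ — this is where the full-faithfulness from Lemma \ref{extYon} must be used not just as an abstract isomorphism but functorially in short exact sequences — and, at the top, confirming that $\Ext^{k-1}_\mcE(Y,X)$ is still reached before the presentation "runs out," i.e.\ that the strict inequality $i<k$ is exactly what the $(k{+}1)$-term resolution supports. A secondary technical point is that $\Ext^i_{\mcM-\Modu}$ is computed in a functor category that may fail to have enough projectives globally; but since $\Psi(X)\in\mcM-\modu_k$ admits a genuine $k$-presentation by finitely generated projectives, the relevant $\Ext^i$ for $i<k$ are well-defined sets and computable from that presentation, so this causes no real trouble.
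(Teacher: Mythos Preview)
Your proposal is correct and follows essentially the same route the paper indicates: the paper's own proof is just the one-line remark that (a) is a straightforward generalization of \cite{MS}, Lemma~2.4(2), using Remark~\ref{ffcogenk}, and that (b) follows by passing to $\mcE^{\op}$. Your dimension-shifting argument---breaking the $\cogen^k$-presentation into short exact sequences, using full faithfulness for $i=0$, and comparing the two long exact Ext-sequences via the five lemma with the vanishing hypotheses on $Y$ and the projectivity of $(M_0,-)\lvert_{\mcM}$---is exactly the content of that referenced lemma, spelled out in the present generality; the bookkeeping concerns you flag at the ends of the ladder are real but routine, and your handling of them is adequate.
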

\begin{proof}
(a) the proof is a straight forward generalization of \cite{MS}, Lemma 2.4, (2) (using Rem. \ref{ffcogenk}) and (b) follows from (a) by passing to the opposite exact category $\mcE^{op}$. 
\end{proof}

We will later use the following simple observation:

\begin{rem} \label{gen-deflationClosed}
Let $\mcE$ be an exact category, $\mcX$ be a fully exact category and $\mcM \subset \mcX$ an additive subcategory. We say $\mcX$ is \emph{deflation-closed} if for any deflation $d\colon X\to X'$ in $\mcE$ with $X,X'$ in $\mcX$ it follows $\Ker d\in \mcX$. The dual notion is \emph{inflation-closed}.  \\
If $\mcX$ is deflation-closed then $\gen_k^{\mcX}(\mcM)= \gen_k^{\mcE} (\mcM) \cap \mcX$. If $\mcX$ is inflation-closed then $\cogen^k_{\mcX} (\mcM) =\cogen^k_{\mcE} (\mcM) \cap \mcX$. 
\end{rem}

\subsection{Inside functor categories}
Let $\mcP$ be an essentially small additive category.
We denote by $h \colon \mcP \to \Modu-\mcP$, $P \mapsto h_P =\Hom_{\mcP}(-,P)$ the Yoneda embedding, we write $h_{\mcP}$ for the essential image of $h$. 

\subsubsection{Adjoint functors}
Let now $\mcM$ be an essentially small full additive subcategory of $\Modu-\mcP$. We consider the contravariant functor
\[
\begin{aligned}
\Psi \colon \Modu-\mcP &\to \mcM - \Modu, \\
X &\mapsto \Hom_{\Modu- \mcP} (X,-)\lvert_{\mcM}=(X,-)\lvert_{\mcM}
\end{aligned}
\]
We also consider the contravariant functor 
\[ 
\begin{aligned}
\Psi' \colon \mcM-\Modu &\to \Modu-\mcP\\
Z & \mapsto (P \mapsto \Hom_{\mcM-\Modu} (Z,\Psi (h_P)))  
\end{aligned}
\]

We generalize \cite{ASoII}, Lem. 3.3..

\begin{lem}
The functors $\Psi$ and $\Psi'$ are contravariant adjoint functors, i.e. the following is a (bi)natural isomorphim
\[
\chi\colon \Hom_{\Modu-\mcP} (X, \Psi'(Z)) \to \Hom_{\mcM-\Modu}(Z, \Psi (X))
\]
defined as follows: A natural transformation $f\in \Hom_{\Modu - \mcP}(X, \Psi'(Z))$, is determined by for every $P \in \mcP, x\in X(P), M \in \mcM$ a group homomorphism 
\[ 
f_{P,x}(M) \colon Z(M) \mapsto \Psi (h_P) (M) = M(P)
\]
then, we define a natural transformation $\chi (f) \colon Z \to \Psi (X)=\Hom_{\Modu-\mcP} (X,-)\lvert_{\mcM}$ for $M \in \mcM$ as follows 
\[ 
\begin{aligned} 
\chi (f) (M) \colon Z(M) &\to \Hom_{\Modu-\mcP} (X,M), \\
z &\mapsto (X(P) \xrightarrow{f_{P,-}(z)} M(P), x \mapsto f_{P,x}(M)(z))_{P\in \mcP} 
\end{aligned}
\]
\end{lem}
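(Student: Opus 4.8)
The plan is to verify directly that the assignment $\chi$ is a bijection with an explicitly constructed inverse, and that both $\chi$ and its inverse are natural. The key point is that everything is pinned down by the Yoneda lemma applied to the category $\Modu-\mcP$: for $X \in \Modu-\mcP$, $P \in \mcP$, and $x \in X(P)$, the element $x$ corresponds to a unique natural transformation $h_P \to X$, so a natural transformation $X \to \Psi'(Z)$ is nothing but a coherent family of group homomorphisms $f_{P,x}(M)\colon Z(M) \to M(P)$ indexed by $P$, $x \in X(P)$, $M \in \mcM$, coherent in the sense of naturality both in the $\mcM$-variable $M$ and in the pair $(P,x)$ (i.e.\ under $\mcP$-morphisms $P \to P'$ and under the transition maps $X(\phi)$ for $\phi$ a $\mcP$-morphism). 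First I would unravel what such a family of homomorphisms is, on both sides of $\chi$: on the left one has $f$ as above; on the right, a natural transformation $g\colon Z \to \Psi(X) = \Hom_{\Modu-\mcP}(X,-)\lvert_\mcM$ assigns to each $M \in \mcM$ and each $z \in Z(M)$ a natural transformation $g(M)(z)\colon X \to M$ in $\Modu-\mcP$, which by Yoneda is again the same datum of group homomorphisms $X(P) \to M(P)$ for all $P$, compatible as $P$ varies.

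Next I would write down the candidate inverse $\chi^{-1}$: given $g\colon Z \to \Psi(X)$, define $f_{P,x}(M)(z)$ to be the value at $x \in X(P)$ of the group homomorphism $g(M)(z)(P)\colon X(P) \to M(P)$. The bulk of the work is then the three routine checks: (i) that $\chi(f)$ as defined really is a natural transformation $Z \to \Psi(X)$ — i.e.\ that for each $z$, the family $(x \mapsto f_{P,x}(M)(z))_{P}$ is a natural transformation $X \to M$ (this uses naturality of $f$ under $\mcP$-morphisms), and that $\chi(f)(M)$ is a group homomorphism natural in $M \in \mcM$ (this uses naturality of $f$ in the $\mcM$-variable); (ii) symmetrically that $\chi^{-1}(g)$ is a genuine natural transformation $X \to \Psi'(Z)$; (iii) that $\chi \circ \chi^{-1} = \id$ and $\chi^{-1}\circ\chi = \id$, which is immediate once the descriptions are laid side by side, since both boil down to the tautology that reading off a value of a homomorphism and then reassembling the homomorphism does nothing. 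Finally I would check binaturality of $\chi$ in $X$ and $Z$: for a morphism $X' \to X$ in $\Modu-\mcP$ one precomposes, for a morphism $Z \to Z'$ in $\mcM-\Modu$ one uses functoriality of $\Psi'$ (which in turn is built from $\Psi(h_{(-)})$), and in both cases the squares commute because $\chi$ was defined by an explicit formula in the entries $f_{P,x}(M)$ that is manifestly compatible with composition.

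The main obstacle — really the only nontrivial bookkeeping — is keeping the two layers of naturality straight: a transformation out of $X$ in $\Modu-\mcP$ involves naturality over $\mcP$, while a transformation out of $Z$ (or into $\Psi(X)$, or into $\Psi'(Z)$) in $\mcM-\Modu$ involves naturality over $\mcM$, and $\Psi'(Z)$ is itself defined as a functor on $\mcP$ whose values are $\Hom$-groups in $\mcM-\Modu$. So one must carefully track which commuting square a given instance of ``$f$ is natural'' refers to. Concretely, the commutation that makes $\chi(f)(M)$ a $\mathbb Z$-linear natural-in-$M$ family is exactly the statement that $f$ is a morphism in $\Modu-\mcP$ with target $\Psi'(Z)$, whose structure maps on morphisms $P \to P'$ encode the naturality of $g(M)(z)$ over $\mcP$; I expect this identification to be a pure diagram chase with no hidden difficulty, so once the dictionary is set up the proof is essentially forced. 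One can, as the statement suggests, shortcut most of this by noting it is the evident generalization of \cite{ASoII}, Lem.\ 3.3, with $h_P$ playing the role of the representable projectives, but I would still spell out the bijection $\chi^{-1}$ and at least one of the naturality squares for completeness.
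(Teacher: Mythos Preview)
Your proposal is correct and follows essentially the same approach as the paper: the paper's proof also just writes down the explicit inverse $\chi'$ (your $\chi^{-1}$, with the identical formula $f_{P,x}(M)(z) = g(M)(z)(P)(x)$ up to notation) and asserts it is inverse to $\chi$. Your plan is in fact more detailed than the paper's, which does not spell out the naturality checks or the binaturality at all.
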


\begin{proof}
We define $\chi' \colon  \Hom_{\mcM-\Modu}(Z, \Psi (X))\to  \Hom_{\Modu-\mcP} (X, \Psi'(Z)) $ as follows: For $g\colon Z \to \Psi (X)= \Hom_{\Modu-\mcP}(X,-)\lvert_{\mcM}$ we have for every $M \in \mcM, z \in Z(M)$ a natural transformation   
$g_{M,z} \colon X \to M$, i.e. for every $P \in \mcP$ a group homomorphism  
\[ 
g_{M,z}(P) \colon 
X(P) \to M(P), x \mapsto g_{M,z} (P) (x),
\]
then we define $\chi' (g) (P) \colon X (P) \to \Psi' (Z) (P) = \Hom_{\mcM-\Modu}(Z, (h_P,-)\lvert_{\mcM})$ as follows 
\[ x \mapsto (Z(M) \to M(P), z \mapsto g_{M,z}(P) (x))_{M\in \mcM}. \] 
Then $\chi'$ is the inverse map to $\chi$. 
\end{proof}

\begin{rem} \label{contrAdj}
Given an adjoint pair of contravariant functors $\Psi$ and $\Psi'$, the natural isomorphisms 
\[ 
\Hom (X, \Psi (Z) ) \to \Hom (Z, \Psi ' (X))
\]
induce natural transformations $\alpha \colon \id \to \Psi' \Psi$ (and $\alpha' \colon \id \to \Psi \Psi'$) as follows 
\[ 
\Hom (X,X) \xrightarrow{\Psi(-)} \Hom (\Psi (X),\Psi (X)) \cong \Hom (X, \Psi'\Psi (X)), \quad  \id_X \mapsto \alpha_X
\]
in this case we have triangle identities  
\[ 
\begin{aligned}
\id_{\Psi(X)} &= (\Psi (X) \xrightarrow{\alpha'_{\Psi(X)}}
  \Psi \Psi'\Psi (X) \xrightarrow{\Psi(\alpha_X)} \Psi (X))\\
  \id_{\Psi'(Z)} &= (\Psi' (Z) \xrightarrow{\alpha_{\Psi'(Z)}}
  \Psi' \Psi\Psi' (Z) \xrightarrow{\Psi'(\alpha'_Z)} \Psi' (Z))
\end{aligned}
\] 
\end{rem}

In \cite{Y}, section 4, a tensor bifunctor is introduced
\[ 
-\otimes_{\mcM}- \colon \Modu -\mcM \times \mcM- \Modu \to (Ab), (F,G) \mapsto F\otimes_{\mcM}G
\]
Now, we consider the covariant funtor
\[ 
\Phi \colon \Modu-\mcP \to \Modu-\mcM, \quad X \mapsto \Hom_{\Modu-\mcP}(-,X)\lvert_{\mcM} =:(-,X)\lvert_{\mcM}
\]
and the following covariant functor 
\[ 
\Phi' \colon \Modu-\mcM \to \Modu-\mcP, \quad Z \mapsto 
(P \mapsto Z\otimes_{\mcM} \Psi (h_P) )
\]

\begin{lem}
The functor $\Phi$ is right adjoint to $\Phi'$, i.e. we have a (bi)natural maps 
\[ 
\Hom_{\Modu-\mcP} (\Phi'(Z), X)  \to \Hom_{\Modu-\mcM}(Z, \Phi (X))
\]
\end{lem}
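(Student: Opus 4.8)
The plan is to establish the adjunction $\Phi' \dashv \Phi$ by exhibiting the bijection explicitly and then checking naturality, exactly paralleling the contravariant case just treated for $\Psi$ and $\Psi'$. The key input is the defining adjunction of the tensor-hom pair for $\mcM$-modules: for $Z \in \Modu-\mcM$, $G \in \mcM-\Modu$ and an abelian group $A$ there is a natural isomorphism $\Hom_{(Ab)}(Z \otimes_{\mcM} G, A) \cong \Hom_{\Modu-\mcM}(Z, \Hom_{(Ab)}(G, A))$, and more to the point, for a right $\mcM$-module $Y$ there is the hom-tensor adjunction $\Hom_{\Modu-\mcM}(Z \otimes_{\mcM} G, -) $-type identity; but what I actually need is the natural isomorphism $Z \otimes_{\mcM} \Psi(h_P) \to$ (evaluation-type object) only insofar as it pairs with $X$. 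Concretely, I would first unwind $\Hom_{\Modu-\mcP}(\Phi'(Z), X)$: a morphism here is a family of group homomorphisms $(\Phi'(Z))(P) = Z \otimes_{\mcM} \Psi(h_P) \to X(P)$, natural in $P \in \mcP$.

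Next I would use the universal property of $-\otimes_{\mcM}-$ from \cite{Y}, section 4: a homomorphism $Z \otimes_{\mcM} \Psi(h_P) \to X(P)$ corresponds naturally to a morphism of right $\mcM$-modules $Z \to \Hom_{(Ab)}(\Psi(h_P)(-), X(P))$, equivalently (by Yoneda on $\mcP$, since $\Psi(h_P)(M) = M(P) = \Hom_{\Modu-\mcP}(h_P, M)$) to data assigning to each $M \in \mcM$ and $z \in Z(M)$ a map $M(P) \to X(P)$ natural in $P$, i.e. a natural transformation $g_{M,z}\colon M \to X$ of right $\mcP$-modules. Assembling over all $P$ and imposing the naturality in $P$ of the original family, one reorganizes this precisely into a natural transformation $Z \to \Hom_{\Modu-\mcP}(-,X)\lvert_{\mcM} = \Phi(X)$, sending $M \mapsto (z \mapsto g_{M,z}) $. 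This is the required bijection $\Hom_{\Modu-\mcP}(\Phi'(Z), X) \to \Hom_{\Modu-\mcM}(Z, \Phi(X))$; its inverse is written down by the same unwinding in reverse, so it is a genuine isomorphism of abelian groups.

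Finally I would verify bifunctoriality: given $X \to X'$ in $\Modu-\mcP$ and $Z' \to Z$ in $\Modu-\mcM$, both squares commute because every step above — the tensor-hom adjunction, the Yoneda identification $\Psi(h_P)(M) = M(P)$, and the reshuffling of the two indexing variables $P$ and $M$ — is itself natural; this is the routine "chase the definitions" part. I expect the main obstacle to be purely bookkeeping: keeping straight the three layers of naturality (in $P \in \mcP$, in $M \in \mcM$, and with respect to the tensor universal property) and checking that the naturality-in-$P$ condition on a family $Z \otimes_{\mcM}\Psi(h_P) \to X(P)$ translates exactly into the statement that the collection $(g_{M,z})_{M,z}$ defines a morphism \emph{of $\mcM$-modules} $Z \to \Phi(X)$ rather than just a pointwise family. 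Once the dictionary $\Psi(h_P)(M) = M(P)$ is in hand and one commits to the explicit formulas (mirroring the definitions of $\chi, \chi'$ in the preceding lemma), there is no real difficulty, only the need to be careful with opposite-category conventions and the direction of the tensor factors $\Modu-\mcM \times \mcM-\Modu$.
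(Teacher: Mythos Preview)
The paper gives no proof of this lemma: it is stated and immediately followed by a remark, so presumably the author regards it as a direct consequence of the tensor--hom adjunction for $\mcM$-modules from \cite{Y}. Your proposal is a correct explicit verification in that spirit, and it parallels the paper's explicit treatment of the $\Psi,\Psi'$ adjunction via $\chi,\chi'$; there is nothing to compare because there is no proof in the paper, but your argument is sound and your identified bookkeeping issues (naturality in $P$ versus $\mcM$-linearity of $z\mapsto g_{M,z}$) are exactly the points one has to check.
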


\begin{rem}
If $F \colon \mcC \leftrightarrow \mcD \colon G$ is an adjoint pair of functors (with $F$ left adjoint to $G$), then we have a unit $u \colon 1_{\mcC} \to GF$ and a counit, $ c \colon FG \to 1_{\mcD}$. 
Let $\mcC_u$ be the full subcategory of objects in $X$ in $\mcC$ such that $u(X)$ is an isomorphism. Let $\mcD_c$ be the full subcategory of objects $Y$ in $\mcD$ such that $c(Y)$ is an isomorphism. 
Then, the triangle identities show directly that $F,G$ restrict to quasi-inverse equivalences $F\colon \mcC_u \leftrightarrow \mcD_c\colon G$. 
\end{rem}

\subsubsection{$\boxed{\cogen^k}$}
Let $k \in \mathbb{N}_0 \cup \{\infty\}$ and assume now $\mcM \subset \modu_k-\mcP$. 
In this subsection we study $\cogen^k(\mcM):= \cogen^k_{\modu_k-\mcP}(\mcM) \subset \modu_k-\mcP$. \\
Our aim is to give a different description of the categories $\cogen^k(\mcM)$ (cf. Lemma \ref{diffdesc})  and to introduce \emph{faithfully balancedness} which leads to the $\cogen^1$ duality (cf. Lemma \ref{cogen1Duality}). 

We have the contravariant functor
\[ 
\Psi \colon \Modu-\mcP \to  \mcM-\Modu, \quad X \mapsto \Hom_{\Modu-\mcP}(X,-)\lvert_{\mcM}
\]
and  $\Psi \lvert_{\cogen^k(\mcM )}\colon \cogen^k(\mcM) \to \mcM-\modu_k$ is fully faithful for $1 \leq k <\infty$.\\

The natural transformation $\alpha \colon \id_{\Modu-\mcP} \to \Psi'\Psi $, for $X \in \Modu-\mcP$ is given by a morphism in $\Modu-\mcP$, $\alpha_X\colon X \to \Psi'\Psi (X)= \Hom_{\mcM -\Modu}(\Psi (X), \Psi(h_{-}))$ which is defined at $P \in \mcP$ via 
\[ 
\begin{aligned}
X(P)=\Hom_{\Modu-\mcP}(h_P, X)   &\to \Hom_{\mcM-\Modu} (\Hom_{\Modu-\mcP} (X,-)\lvert_{\mcM}, \Hom_{\Modu-\mcP} (h_P,-)\lvert_{\mcM}) \\
f  &\mapsto  [\Hom_{\Modu-\mcP}(X, -) \xrightarrow{-\circ f} \Hom_{\Modu-\mcP} (h_P , -) ]\lvert_{\mcM}
\end{aligned}
\]

We observe that $\alpha_M$ is an isomorphism for every $M \in \mcM$ (since \[ (\Psi'\Psi (M)) (P)= \Hom_{\mcM-\Modu}(\Hom_{\mcM}(M,-), \Psi (h_P)) = \Psi (h_P) (M) = \Hom_{\Modu-\mcP} (h_P, M)= M(P)\] using Yoneda's Lemma twice).

\begin{lem} \label{diffdesc}
For $1\leq k \leq \infty$ we have 
\[
\begin{aligned}
&\cogen^k_{\modu_k-\mcP}(\mcM) = \\
&\{ X\in \modu_{k}-\mcP \mid \alpha_X \text{ isom. },\Psi (X) \in \mcM-\modu_k, \Ext^i_{\mcM -\Modu}(\Psi (X), \Psi (h_P))=0, 1\leq i < k, \forall P \in \mcP \}
\end{aligned}
\]
\end{lem}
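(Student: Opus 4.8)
## Proof proposal

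The plan is to prove the two inclusions separately. Write $\mcC^k$ for the set on the right-hand side, i.e. the objects $X \in \modu_k - \mcP$ with $\alpha_X$ an isomorphism, $\Psi(X) \in \mcM - \modu_k$, and $\Ext^i_{\mcM-\Modu}(\Psi(X),\Psi(h_P)) = 0$ for $1 \le i < k$ and all $P \in \mcP$.

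\emph{The inclusion $\cogen^k(\mcM) \subset \mcC^k$.} Let $X \in \cogen^k(\mcM)$, so there is an exact sequence $0 \to X \to M_0 \to \cdots \to M_k \to Z \to 0$ in $\modu_k-\mcP$ with $M_i \in \mcM$, and applying $\Hom(-,M)\lvert_\mcM$ yields the dual exact sequence of abelian groups for every $M \in \mcM$. The last two facts to record about $X$ are immediate: by definition $\Psi(X) = (X,-)\lvert_\mcM \in \mcM - \modu_k$ (it has the $k$-presentation $\Psi(M_0) \to \cdots \to \Psi(M_k) \to \Psi(Z) \cdots$ — more precisely, splitting the long sequence into short exact pieces and applying $\Psi$ to each gives a projective $k$-presentation of $\Psi(X)$ in $\mcM-\Modu$, using that $\Psi(M_i) = \Hom_\mcM(M_i,-)$ is projective). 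For the $\Ext$-vanishing, I would invoke Lemma \ref{isoOnExt}(a): since $X \in \cogen^k_{\mcE}(\mcM)$ with $\mcE = \modu_k - \mcP$, the functor $\Psi$ induces natural isomorphisms $\Ext^i_{\mcE}(Y,X) \xrightarrow{\sim} \Ext^i_{\mcM-\Modu}(\Psi(X),\Psi(Y))$ for $0 \le i < k$, valid for all $Y \in \bigcap_{1\le i<k}\Ker\Ext^i_\mcE(-,\mcM)$. Taking $Y = h_P$ (which is projective in $\modu_k-\mcP$, hence in all those kernels, so the hypothesis is vacuously satisfied) gives $\Ext^i_{\mcM-\Modu}(\Psi(X),\Psi(h_P)) \cong \Ext^i_\mcE(h_P,X) = 0$ for $1 \le i < k$, as the left-hand side vanishes since $h_P$ is projective. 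Finally, the case $i = 0$ of the same isomorphism (or directly Lemma \ref{extYon}/Remark \ref{remNotSmall}, since $\cogen^k \subset \cogen^1_{\add}$) reads $X(P) = \Hom_\mcE(h_P,X) \xrightarrow{\sim} \Hom_{\mcM-\Modu}(\Psi(X),\Psi(h_P)) = (\Psi'\Psi(X))(P)$, and one checks this isomorphism is exactly $\alpha_X$ evaluated at $P$ (this is a diagram chase matching the explicit formula for $\alpha$ given just before the Lemma against the Yoneda identifications); naturality in $P$ then says $\alpha_X$ is an isomorphism of $\mcP$-modules.

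\emph{The inclusion $\mcC^k \subset \cogen^k(\mcM)$.} Here I start from $X \in \mcC^k$ and must build the exact sequence witnessing $X \in \cogen^k(\mcM)$. Since $\Psi(X) \in \mcM - \modu_k$, pick a projective $k$-presentation $Q_k \to \cdots \to Q_0 \to \Psi(X) \to 0$ in $\mcM - \Modu$; each $Q_i$ is in $\add$ of representables, so $Q_i = \Psi(M_i)$ for some $M_i \in \mcM$ by Yoneda. Now apply $\Psi'$, using $\alpha_M$ iso for $M \in \mcM$ and $\alpha_X$ iso by hypothesis, together with the $\Ext$-vanishing $\Ext^i_{\mcM-\Modu}(\Psi(X),\Psi(h_P)) = 0$ for $1 \le i < k$ — the latter is precisely what is needed so that applying the (left-exact-on-the-relevant-piece) functor $\Psi' = \Hom_{\mcM-\Modu}(-,\Psi(h_{-}))$ to the short exact pieces of the presentation keeps them exact after splicing, yielding an exact complex $0 \to X \to M_0 \to \cdots \to M_k \to Z' \to 0$ in $\Modu - \mcP$ (with $Z'$ the cokernel). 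Because $\modu_k - \mcP$ is closed under the relevant kernels/cokernels up to degree $k$, this sequence lives in $\modu_k - \mcP$. Finally, applying $\Psi = \Hom(-,M)\lvert_\mcM$ to this sequence recovers the original presentation of $\Psi(X)$ (again by Yoneda, $\Psi(M_i) = Q_i$ and the maps match), which is exact by construction — so the defining exactness condition of $\cogen^k(\mcM)$ holds, and $X \in \cogen^k(\mcM)$.

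\emph{Main obstacle.} The delicate point is the second inclusion: verifying that applying $\Psi'$ to the projective presentation of $\Psi(X)$ produces an \emph{exact} sequence in $\Modu - \mcP$ that genuinely resolves $X$ — this is where all three hypotheses ($\alpha_X$ iso, $\Psi(X) \in \mcM-\modu_k$, and the higher $\Ext$-vanishing) are consumed simultaneously, and one must carefully track exactness degree by degree through the splicing, invoking the long exact sequence for $\Ext_{\mcM-\Modu}(-,\Psi(h_P))$ and the fact that $\Psi'$ is left exact with $\Psi'\Psi(M) = M$ for $M \in \mcM$. The case $k = \infty$ requires only a routine "take limits/all degrees" adjustment. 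The forward inclusion, by contrast, is essentially a bookkeeping exercise once Lemma \ref{isoOnExt} is in hand.
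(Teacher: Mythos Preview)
Your argument is correct and follows essentially the same route as the paper, which simply defers to \cite{MS}, Lemma~2.2(1) with $\Hom_\Gamma(-,M)$ replaced by $\Hom_{\mcM-\Modu}(-,\Psi(h_P))$: your reverse inclusion carries out exactly this substitution (apply $\Psi'$ to a projective $k$-presentation of $\Psi(X)$ and use the Ext-vanishing together with $\alpha_X$, $\alpha_{M_i}$ being isomorphisms), and your forward inclusion packages the same computation through Lemma~\ref{isoOnExt}(a) rather than redoing it by hand. One slip: where you write ``the left-hand side vanishes since $h_P$ is projective'' you mean the right-hand side $\Ext^i_{\mcE}(h_P,X)$.
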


\begin{proof}
The proof is a straight forward generalization of \cite{MS}, Lemma 2.2, (1) (the functor $\Hom_{\Gamma} (-,M)$ has to be replaced by applying $\Hom_{\mcM-\Modu} ( -, \Psi (h_P))$ for all $P \in \mcP$). 
\end{proof}

\begin{dfn} We say $\mcM$ is \textbf{faithfully balanced} if $h_{\mcP}\subset \cogen^1(\mcM)$. 
\end{dfn}

\begin{lem} \label{cogen1Duality} ($\cogen^1$ duality) If $\mcM$ is faithfully balanced, we denote by $\tilde{\mcM}=\Psi (h_{\mcP})\subset \mcM-\modu_k$, then $\Psi $ defines a contravariant equivalence 
\[ 
\cogen^1_{\modu_1-\mcP}(\mcM) \longleftrightarrow \cogen^1_{\mcM-\modu_1}(\tilde{\mcM})
\]
and contravariant equivalences 
\[ 
\cogen^k_{\modu_k-\mcP}(\mcM) \longleftrightarrow \cogen^1_{\mcM-\modu_1}(\tilde{\mcM}) \cap \bigcap_{1\leq i <k} \Ker (\Ext^i_{\mcM-\modu_k} (-, \tilde{\mcM} ))
\]
\end{lem}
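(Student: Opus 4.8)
\textbf{Proof plan for Lemma \ref{cogen1Duality}.}

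The plan is to realize this as an instance of the general principle from the earlier remark: an adjoint pair $(\Psi,\Psi')$ of contravariant functors restricts to quasi-inverse equivalences between the full subcategory $\mcC_\alpha$ of objects $X$ on which $\alpha_X\colon X\to\Psi'\Psi(X)$ is an isomorphism, and the full subcategory $\mcD_{\alpha'}$ of objects $Z$ on which $\alpha'_Z\colon Z\to\Psi\Psi'(Z)$ is an isomorphism (the triangle identities in Remark \ref{contrAdj} make this automatic). So the first and main task is to identify both of these subcategories with the stated $\cogen^1$ categories. For the source side this is exactly Lemma \ref{diffdesc}: when $k=1$ it says $\cogen^1_{\modu_1-\mcP}(\mcM)$ consists precisely of the $X\in\modu_1-\mcP$ with $\alpha_X$ an isomorphism and $\Psi(X)\in\mcM-\modu_1$ (the $\Ext$-vanishing conditions being vacuous for $k=1$); the condition $\Psi(X)\in\mcM-\modu_1$ is then the statement that $X$ lands in the domain of the adjunction restricted to finitely presented modules, so it is not an extra constraint once we work in the right ambient categories. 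For general $k$ we keep Lemma \ref{diffdesc} verbatim, which produces the extra $\Ext^i$-vanishing clauses appearing on the right-hand side of the second equivalence.

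Next I would run the dual analysis on the target side. Since $\Psi'$ restricted to $\mcM-\Modu$ is, by construction, $Z\mapsto(P\mapsto\Hom_{\mcM-\Modu}(Z,\Psi(h_P)))$, and $\tilde{\mcM}=\Psi(h_{\mcP})$, the category $\cogen^1_{\mcM-\modu_1}(\tilde{\mcM})$ should be characterized — by applying Lemma \ref{diffdesc} in the opposite category, i.e. with the roles of $\mcP$ and $\mcM$ and of the two functors swapped — as exactly the $Z\in\mcM-\modu_1$ on which the counit-type map $\alpha'_Z\colon Z\to\Psi\Psi'(Z)$ is an isomorphism and such that $\Psi'(Z)$ is finitely presented over $\mcP$. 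This is the step where the hypothesis \emph{$\mcM$ is faithfully balanced} is used: it guarantees $h_{\mcP}\subset\cogen^1(\mcM)$, hence by Lemma \ref{diffdesc} that $\alpha_{h_P}$ is an isomorphism for every $P$, which is what makes $\Psi'$ behave correctly on the subcategory $\tilde{\mcM}$ generated by the $\Psi(h_P)$ and, via the triangle identity $\id_{\Psi(X)}=\Psi(\alpha_X)\circ\alpha'_{\Psi(X)}$, forces $\alpha'$ to be an isomorphism on $\tilde{\mcM}$ and hence (by a $\Psi$-exactness/five-lemma argument along a $1$-presentation) on all of $\cogen^1_{\mcM-\modu_1}(\tilde{\mcM})$. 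Combining the two identifications with the abstract restriction-to-equivalences statement yields the first displayed equivalence; the second is obtained by intersecting with the $\Ext^i$-kernel conditions coming from the $k>1$ version of Lemma \ref{diffdesc} on both sides and checking that $\Psi$ matches them up, which is where Lemma \ref{isoOnExt} (isomorphisms on $\Ext^i$ for $i<k$) enters.

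The step I expect to be the real obstacle is the \emph{symmetry} of the argument: verifying that applying Lemma \ref{diffdesc} "in the opposite direction" is legitimate. The two contravariant functors $\Psi$ and $\Psi'$ are not symmetric in form — $\Psi$ is a genuine Hom-restriction functor out of $\Modu-\mcP$, while $\Psi'$ is defined through $\Hom_{\mcM-\Modu}(-,\Psi(h_P))$ — so one must check that, under the faithful balancedness hypothesis, $\Psi'$ on $\tilde{\mcM}$ (and more generally on $\cogen^1_{\mcM-\modu_1}(\tilde{\mcM})$) really is computed by the same kind of restricted-Hom formula, i.e. that $\Psi'(Z)(P)\cong\Hom_{\mcM-\Modu}(Z,\Psi(h_P))$ agrees with what the "opposite" instance of the general setup predicts. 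Concretely this means showing $h_{\mcP}$ plays, inside $\Modu-\mcM$ relative to $\tilde{\mcM}$, the role that $h_{\mcP}$ originally played inside $\Modu-\mcP$ relative to $\mcM$ — equivalently that $\Psi$ sends the Yoneda objects $h_P$ to $\tilde{\mcM}$ and that the induced map on $\Ext$'s is an isomorphism in the needed range. Once that dictionary is in place the rest is formal: invoke Lemma \ref{diffdesc}, Lemma \ref{isoOnExt}, Remark \ref{ffcogenk} for full faithfulness, and the abstract adjunction-restricts-to-equivalence remark, and assemble the two displays.
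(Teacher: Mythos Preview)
Your plan is correct and is essentially the paper's own proof: identify both $\cogen^1$ categories as the loci where $\alpha$ (resp.\ $\alpha'$) is an isomorphism via Lemma \ref{diffdesc} and its mirror version, then invoke the triangle identities of the contravariant adjunction $(\Psi,\Psi')$ from Remark \ref{contrAdj}. The ``symmetry obstacle'' you flag dissolves more quickly than you expect: faithful balancedness gives $h_{\mcP}\subset\cogen^1(\mcM)$, so by Lemma \ref{extYon} the functor $\Psi$ induces an equivalence $\mcP^{op}\cong\tilde{\mcM}=\Psi(h_{\mcP})$, and under this identification $\Psi'(Z)(P)=\Hom_{\mcM-\Modu}(Z,\Psi(h_P))$ is \emph{by its very definition} already the restricted-Hom functor to $\tilde{\mcM}$ --- hence the right-module version of Lemma \ref{diffdesc} applies directly to give $\cogen^1(\tilde{\mcM})=\{Z:\alpha'_Z\text{ iso}\}$ with no extra five-lemma step, and for $k>1$ the $\Ext$-vanishing conditions in Lemma \ref{diffdesc} are already stated in terms of $\Psi(X)$ and $\tilde{\mcM}$, so Lemma \ref{isoOnExt} is not actually required.
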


\begin{proof}
Let $k=1$. Since we have an adjoint pair of contravariant functors $\Psi, \Psi'$ it follows from the triangle identities (cf. Remark \ref{contrAdj}): 
If $\alpha_X$ is an isomorphism then also $\alpha'_{\Psi (X)}$ and if $\alpha'_Z$ is an isomorphism then also $\alpha_{\Psi'(Z)}$. 
Now, since $\mcM$ is faithfully balanced we have that $\Psi$ induces an equivalence $\mcP^{op} \cong \tilde{\mcM} =\Psi (h_{\mcP})$ by Lemma \ref{extYon}. It follows from the definition of $\Psi'$ and a right module version of Lemma \ref{diffdesc} that $\cogen^1 (\tilde{\mcM})= \{ Z \in \mcM- \modu_1 \mid \alpha'_Z \text{ isom}\} $.\\
The rest is a straightforward generalization of the proof of \cite{MS}, Lemma 2.9.
\end{proof}


\subsubsection{$\boxed{\gen_k}$}

We study $\gen_k(\mcM) =\gen_k^{\Modu-\mcP} (\mcM) \subset \Modu-\mcP$. We again give a different description of these categories using tensor products of $\mcM$-modules (cf. Lemma \ref{diffdescgen}). This is the main ingredient in the proof of the symmetry principle in the next subsection.


We have the covariant functor 
 \[
 \Phi \colon \Modu-\mcP \to \Modu-\mcM, \quad X \mapsto \Hom_{\Modu-\mcP} (-,X) \lvert_{\mcM}
 \]

and $\Phi \lvert_{\gen_k(\mcM)}\colon \gen_k(\mcM) \to \modu_k- \mcM$ is fully faithful. 
We have an induced covariant functor 
\[ 
\varepsilon = \Phi'\circ \Phi \colon \Modu-\mcP \to \Modu-\mcP, \quad X \mapsto \varepsilon_X
\]
defined for $P \in \mcP$ as 
\[ 
\varepsilon_X(P):= \Phi (X)\otimes_{\mcM}\Psi (h_P) 
\]
and a natural transformation $\varphi \colon \varepsilon \to \id_{\Modu-\mcP}$, for $X \in \Modu-\mcP$ this is given by a morphism $\varphi_X \colon \varepsilon_X \to X$ which is defined at $P \in \mcP$ via 
\[ 
\begin{aligned} 
\Hom_{\Modu-\mcP} (-,X)\lvert_{\mcM} \otimes_{\mcM} (\Hom_{\Modu-\mcP} (h_P, -)\lvert_{\mcM})  &\to \Hom_{\Modu-\mcP} (h_P, X)=X(P)\\
\underbrace{g \otimes f}_{ \in \Hom (M,X)\otimes_{\mathbb{Z}} \Hom (h_P, M)}  & \mapsto g\circ f
\end{aligned} 
\]

\begin{rem} $\Phi$ and is right adjoint functor of $\Phi'$  between abelian categories therefore $\Phi$ is left exact and $\Phi'$ is right exact, $\varphi$ is the counit of this adjunction. If $M \in \mcM$, then $\varphi_M$ is an isomorphism.  
\end{rem}

\begin{lem} \label{diffdescgen}
For $1\leq k \leq \infty$ we have 
\[
\begin{aligned} &\gen_k^{\Modu-\mcP}(\mcM) =\\
&\{ X\in \Modu -\mcP \mid \varphi_X \text{ isom. }, \Phi(X)\in \modu_k-\mcM, \;\Tor^i_{\mcM}(\Phi (X),\Psi (h_P))=0, 1\leq i < k, \forall P \in \mcP \}
\end{aligned}
\]
\end{lem}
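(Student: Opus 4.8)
The plan is to mirror the structure of the proof of Lemma \ref{diffdesc} (the $\cogen^k$ case), but with the contravariant adjoint pair $(\Psi,\Psi')$ replaced by the covariant adjoint pair $(\Phi',\Phi)$ and with $\Hom_{\mcM-\Modu}(-,\Psi(h_P))$ replaced throughout by the tensor functor $-\otimes_{\mcM}\Psi(h_P)$. First I would fix $X\in\Modu-\mcP$ and suppose $X\in\gen_k^{\Modu-\mcP}(\mcM)$, so there is an exact sequence $0\to Z\to M_k\to\cdots\to M_0\to X\to 0$ with $M_i\in\mcM$ such that $\Hom_{\mcE}(M,M_\bullet)\to\Hom_{\mcE}(M,X)\to 0$ stays exact for all $M\in\mcM$; equivalently, applying $\Phi=\Hom_{\Modu-\mcP}(-,X)\lvert_{\mcM}$, this says $\Phi(M_k)\to\cdots\to\Phi(M_0)\to\Phi(X)\to 0$ is exact in $\Modu-\mcM$ with each $\Phi(M_i)=\Hom_{\mcM}(-,M_i)$ projective, i.e. $\Phi(X)\in\modu_k-\mcM$ and the displayed complex is (the start of) a projective $k$-presentation of $\Phi(X)$. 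I would then split the sequence into short exact pieces and chase $\varphi$ along them, using that $\varphi_{M_i}$ is an isomorphism (as noted in the remark before the Lemma) and that $\varepsilon=\Phi'\Phi$ is right exact, to conclude $\varphi_X$ is an isomorphism by the five-lemma/snake argument; the higher homology of $\Phi'$ applied to the $P$-component is exactly $\Tor^i_{\mcM}(\Phi(X),\Psi(h_P))$, and the fact that the $k$-presentation stays exact after tensoring forces these to vanish for $1\le i<k$.

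For the reverse inclusion I would start from $X$ satisfying the three right-hand conditions: $\varphi_X$ iso, $\Phi(X)\in\modu_k-\mcM$, and $\Tor^i_{\mcM}(\Phi(X),\Psi(h_P))=0$ for $1\le i<k$ and all $P$. From $\Phi(X)\in\modu_k-\mcM$ take a projective $k$-presentation $\Hom_{\mcM}(-,M_k)\to\cdots\to\Hom_{\mcM}(-,M_0)\to\Phi(X)\to 0$; since the Yoneda embedding $\mcM\hookrightarrow\Modu-\mcM$ is fully faithful and (by $\Phi(M_i)=\Hom_{\mcM}(-,M_i)$) this complex is in the image of $\Phi$ on objects, I would lift the presentation maps to morphisms $M_k\to\cdots\to M_0$ in $\mcM$, whose image and kernels assemble (using that $\Modu-\mcP$ is abelian) into an exact sequence $0\to Z\to M_k\to\cdots\to M_0\to \Phi'\Phi(X)=\varepsilon_X\xrightarrow{\varphi_X} X\to 0$ — here the crucial point is that applying $\Phi'$ to the presentation produces $\varepsilon_X$ at the right-hand end, and the $\Tor$-vanishing is precisely what guarantees that no homology gets introduced in the middle, i.e. that the lifted complex is exact in $\Modu-\mcP$ and not merely a complex. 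Finally, applying $\Phi=\Hom_{\Modu-\mcP}(-,X)\lvert_{\mcM}$ to this sequence and using that it was built from a $\Phi$-image presentation recovers the defining exactness of $\gen_k$, so $X\in\gen_k^{\Modu-\mcP}(\mcM)$; here one uses Remark \ref{ffcogenk}/Lemma \ref{extYon} that $\Phi$ is fully faithful on $\gen_1^{\add}(\mcM)$ to identify $\Hom_{\Modu-\mcP}(M_i,X)$ with $\Hom_{\Modu-\mcM}(\Phi(M_i),\Phi(X))=\Phi(X)(M_i)$.

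The step I expect to be the main obstacle is the bookkeeping that identifies the derived functors of $\Phi'=-\otimes_{\mcM}\Psi(h_P)$ on the $P$-component with $\Tor^i_{\mcM}(\Phi(X),\Psi(h_P))$ and then controls them along the short exact sequences; in particular one must check that $\Phi'$ is honestly computed by tensoring a projective resolution by $\Phi(X)$ of bounded length $k$, that the projective $\mcM$-modules $\Phi(M_i)=\Hom_{\mcM}(-,M_i)$ are $\Tor$-acyclic against $\Psi(h_P)$ (which they are, being projective), and that the $\varphi$-naturality squares glue correctly across the decomposition of the $k$-presentation into short exact sequences. Since the statement explicitly allows $k=\infty$, I would also note separately that in that case ``$\Phi(X)\in\modu_\infty-\mcM$'' together with all $\Tor^i$ vanishing ($i\ge 1$) is exactly the $\infty$-analogue, and the inductive SES-chase goes through verbatim with no termination issue. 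As with the companion lemmas, the honest remark to make is that this is a routine transcription of \cite{MS}, Lemma 2.2 (1) with $\Hom_{\Gamma}(-,M)$ replaced by $-\otimes_{\mcM}\Psi(h_P)$ for all $P\in\mcP$, the only genuinely new ingredient being Yoneda's tensor bifunctor from \cite{Y} and its right-exactness.
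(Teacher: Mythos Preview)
Your proposal is correct and follows essentially the same route as the paper: in both directions one compares the sequence $M_k\to\cdots\to M_0\to X\to 0$ with its image under $\varepsilon=\Phi'\Phi$ via the natural transformation $\varphi$, using right-exactness of $\Phi'$ and that $\varphi_{M_i}$ are isomorphisms, and reads off the $\Tor$-vanishing from exactness of the lower row. The paper presents this as a single two-row diagram rather than a chain of short exact sequences, and in the reverse direction does not bother to re-verify the $\Hom(M,-)$-exactness since the top row is obtained from a projective presentation of $\Phi(X)$ by construction; your extra remarks on $k=\infty$ and on the Yoneda identification are fine but not needed for the argument.
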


\begin{proof}
Let $X \in \gen_k(\mcM)$, then there exists an exact sequence $M_k \to \cdots \to M_0 \to X \to 0$ such that $\Phi$ preserves its exactness, this implies $\Phi (X) \in \modu_k-\mcM$. Now, we apply $\varepsilon= \Phi'\Phi$ and consider the commutative diagram 
\[ 
\xymatrix{M_k \ar[r]& \cdots\ar[r] & M_0\ar[r] & X\ar[r] &0 \\
\varepsilon_{M_k} \ar[u]^{\varphi_{M_k}}\ar[r]& \cdots \ar[r] & \varepsilon_{M_0}\ar[u]^{\varphi_{M_0}}\ar[r] & \varepsilon_X \ar[u]^{\varphi_{X}}\ar[r]& 0 }
\]
Now, since $\Phi'$ is right exact and $\varphi_{M_i}$ is an isomorphism for $0\leq i\leq k$, we conclude that $\varphi_X$ is an isomorphism and the lower row is exact. This implies $\Tor^i_{\mcM} (\Phi (X),\Psi (h_P))=0, 1\leq i < k$. \\
Conversely, if we take $X\in \Modu-\mcP$ fulfilling the assumptions in the set bracket of the lemma. We can apply $\Phi'$ to the projective $k$-presentation of $\Phi (X)$, then we can find a diagram as before but this time we know from the assumptions that the bottom row is exact. Furthermore, since $\varphi_*$ is an isomorphism in all places of the diagram, we have that also the top row is exact. This implies $X \in \gen_k^{\Modu-\mcP}(\mcM)$. 
\end{proof}


\subsection{The symmetry principle}

Now, we study these subcategories in more general exact categories. 
For an exact category $\mcE$ with enough projectives $\mcP$ and an exact category $\mcF$ with enough injectives $\mcI$, we consider the covariant, exact, fully faithful functors  
\[
\begin{aligned}
\mathbb{P}\colon \mcE & \to \modu_{\infty}-\mcP, \quad X \mapsto \Hom_{\mcE}(-,X)\lvert_{\mcP} \\
\mathbb{I}\colon \mcF^{\op} &\to \modu_{\infty}-\mcI^{op}, \quad X \mapsto \Hom_{\mcF} (X,-)\lvert_{\mcI^{op}}
\end{aligned}
\]
cf. \cite{E-master}, Prop. 2.2.1, Prop. 2.2.8

\begin{rem} \label{P(gen)}
For an additive category $\mcM$ of $\mcE$ (resp. of $\mcF$) we have: \[ 
\begin{aligned}
\mathbb{P} (\gen_k^{\mcE} (\mcM)) &= \Bild \mathbb{P}  \cap \gen_k^{\Modu-\mcP} (\mathbb{P} (\mcM )), \\
\quad \mathbb{I} ((\cogen^k_{\mcF}(\mcM))^{op}) &= \mathbb{I} (\gen_k^{\mcF^{op}} (\mcM^{op})) = \Bild \mathbb{I} \cap \gen_k^{\Modu-\mcI^{op}} (\mathbb{I} (\mcM^{op}))
\end{aligned}
\]
This follows from remark \ref{gen-deflationClosed} since $\mathbb{P}\colon \mcE\to \Bild \mathbb{P}$ is an equivalence of exact categories and $\Bild \mathbb{P}$ is deflation-closed in $\modu_{\infty}-\mcP$ and $\modu_{\infty}-\mcP$ is deflation-closed in $\Modu -\mcP$. The second statement follows by passing to the opposite category. 
\end{rem}

As before, let $\Phi \colon \mcE \to \Modu-\mcM, \Phi(X)= \Hom_{\mcE}(-,X)\lvert_{\mcM}$, $\Psi \colon \mcE \to \mcM-\Modu, \Psi (X)= \Hom_{\mcE} (X,-)\lvert_{\mcM}$. 
We have the immediate corollary:
\begin{cor} (of Lem. \ref{diffdescgen} and Rem. \ref{P(gen)})
(1) Let $\mcE$ be an exact category with enough projectives $\mcP$ and $\mcM$ a full additive subcategory. Then the following are equivalent: 
\begin{itemize}
    \item[(1)] $X \in \gen_k^{\mcE}(\mcM)$
    \item[(2)] $\Phi (X) \in \modu_k-\mcM$ and for every $P \in \mcP$: 
    \[ 
   \Phi (X) \otimes_{\mcM} \Psi (P) \to \Hom_{\mcE}(P,X), \; \; g\otimes f \mapsto g \circ f
    \] 
    is an isomorphism, $\Tor^i_{\mcM} (\Phi (X),\Psi (P) )=0 $, $1\leq i <k$.
\end{itemize}
(2) If $\mcE$ is an exact category with enough injectives $\mcI$ and $\mcM$ a full additive subcategory. Then the following are equivalent: 
\begin{itemize}
    \item[(1)] $X \in \cogen^k_{\mcE}(\mcM)$
    \item[(2)] $\Psi (X) \in \mcM-\modu_k$ and for every $I \in \mcI$: 
    \[ 
    \Phi (I) \otimes_{\mcM} \Psi (X)  \to \Hom_{\mcF}(X,I), \; \; g\otimes f \mapsto g \circ f
    \] 
    is an isomorphism, $\Tor^i_{\mcM} (\Phi (I), \Psi (X))=0 $, $1\leq i <k$.
\end{itemize}
\end{cor}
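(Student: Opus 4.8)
The statement is an immediate corollary of Lemma~\ref{diffdescgen} together with Remark~\ref{P(gen)}, so the plan is simply to transport the "different description" of $\gen_k$ inside the functor category $\Modu-\mcP$ back to the original exact category $\mcE$ (and dually for $\cogen^k$ via passing to $\mcF^{op}$). First I would fix the notation: by hypothesis $\mcE$ has enough projectives $\mcP$, so we have the fully faithful exact functor $\mathbb{P}\colon \mcE \to \modu_\infty-\mcP$, $X \mapsto \Hom_{\mcE}(-,X)\lvert_{\mcP}$, identifying $\mcE$ with the deflation-closed subcategory $\Bild\mathbb{P}$ of $\modu_\infty-\mcP$. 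Under this identification, for $M \in \mcM \subset \mcE$ we get $\mathbb{P}(M) = h_M$-type objects in $\Modu-\mcP$ (restrictions of representable functors), and the right $\mathbb{P}(\mcM)$-module $\Psi(h_P)$ appearing in Lemma~\ref{diffdescgen} is exactly $\Psi(P) = \Hom_{\mcE}(P,-)\lvert_{\mcM}$ since $\Hom_{\Modu-\mcP}(h_P, \mathbb{P}(M)) = \mathbb{P}(M)(P) = \Hom_{\mcE}(P,M)$ by Yoneda.

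Next I would chain the equivalences. By Remark~\ref{P(gen)}, $X \in \gen_k^{\mcE}(\mcM)$ if and only if $\mathbb{P}(X) \in \gen_k^{\Modu-\mcP}(\mathbb{P}(\mcM))$ (using that $\mathbb{P}(X)$ automatically lies in $\Bild\mathbb{P}$). Now apply Lemma~\ref{diffdescgen} to the right-hand side: $\mathbb{P}(X) \in \gen_k^{\Modu-\mcP}(\mathbb{P}(\mcM))$ iff $\varphi_{\mathbb{P}(X)}$ is an isomorphism, $\Phi(\mathbb{P}(X)) \in \modu_k-\mathbb{P}(\mcM)$, and $\Tor^i_{\mathbb{P}(\mcM)}(\Phi(\mathbb{P}(X)), \Psi(h_P)) = 0$ for $1 \leq i < k$ and all $P \in \mcP$. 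It remains to translate each of these three conditions. For the first: the category $\Modu-\mathbb{P}(\mcM)$ is naturally identified with $\Modu-\mcM$ via the equivalence $\mcM \cong \mathbb{P}(\mcM)$, and under this identification $\Phi(\mathbb{P}(X)) = \Hom_{\Modu-\mcP}(-, \mathbb{P}(X))\lvert_{\mathbb{P}(\mcM)}$ corresponds to $\Hom_{\mcE}(-,X)\lvert_{\mcM} = \Phi(X)$ (the original $\Phi$), again because $\mathbb{P}$ is fully faithful. The evaluation $\varepsilon_{\mathbb{P}(X)}(P) = \Phi(\mathbb{P}(X)) \otimes_{\mathbb{P}(\mcM)} \Psi(h_P)$ becomes $\Phi(X) \otimes_{\mcM} \Psi(P)$, and tracing the definition of $\varphi$ in the excerpt, the map $\varphi_{\mathbb{P}(X)}(P)\colon \Phi(X) \otimes_{\mcM} \Psi(P) \to \mathbb{P}(X)(P) = \Hom_{\mcE}(P,X)$ is precisely $g \otimes f \mapsto g \circ f$. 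So "$\varphi_{\mathbb{P}(X)}$ isomorphism" translates verbatim into the displayed isomorphism in condition~(2). Finally $\Tor^i_{\mathbb{P}(\mcM)}(\Phi(\mathbb{P}(X)), \Psi(h_P)) = \Tor^i_{\mcM}(\Phi(X), \Psi(P))$ under the same identifications. This gives part~(1) of the corollary.

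For part~(2) I would dualize: $\mcE$ has enough injectives $\mcI$, so $\mcE^{op}$ has enough projectives $\mcI^{op}$ and $\cogen^k_{\mcE}(\mcM) = (\gen_k^{\mcE^{op}}(\mcM^{op}))^{op}$. Apply part~(1) to $\mcE^{op}$ with the additive subcategory $\mcM^{op}$ and the projectives $\mcI^{op}$: an object $X$ lies in $\cogen^k_{\mcE}(\mcM)$ iff $X$ (viewed in $\mcE^{op}$) lies in $\gen_k^{\mcE^{op}}(\mcM^{op})$, iff $\Hom_{\mcE^{op}}(-,X)\lvert_{\mcM^{op}} \in \modu_k-\mcM^{op}$ and for every $I \in \mcI$ the map $\Hom_{\mcE^{op}}(-,X)\lvert_{\mcM^{op}} \otimes_{\mcM^{op}} \Hom_{\mcE^{op}}(I,-)\lvert_{\mcM^{op}} \to \Hom_{\mcE^{op}}(I,X)$ is an isomorphism with higher Tor vanishing. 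Now rewrite the op-Hom-groups back in $\mcE$: $\Hom_{\mcE^{op}}(-,X)\lvert_{\mcM^{op}} = \Hom_{\mcE}(X,-)\lvert_{\mcM} = \Psi(X)$, which lies in $\modu_k-\mcM^{op} = \mcM-\modu_k$; the module $\Hom_{\mcE^{op}}(I,-)\lvert_{\mcM^{op}}$ is $\Hom_{\mcE}(-,I)\lvert_{\mcM} = \Phi(I)$; and $\Hom_{\mcE^{op}}(I,X) = \Hom_{\mcE}(X,I)$. The tensor product over $\mcM^{op}$ of a right $\mcM^{op}$-module with a left $\mcM^{op}$-module equals, up to swapping the factors, the tensor product over $\mcM$ of a right $\mcM$-module with a left $\mcM$-module, so $\Psi(X) \otimes_{\mcM^{op}} \Phi(I) \cong \Phi(I) \otimes_{\mcM} \Psi(X)$, and one checks the induced comparison map is $g \otimes f \mapsto g \circ f$ into $\Hom_{\mcE}(X,I)$. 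Likewise $\Tor^i_{\mcM^{op}}(\Psi(X), \Phi(I)) \cong \Tor^i_{\mcM}(\Phi(I), \Psi(X))$. This yields exactly condition~(2) of part~(2) of the corollary. (One mild point of care: in part~(2) the codomain is written $\Hom_{\mcF}(X,I)$ — reading it as $\Hom_{\mcE}(X,I)$ with $\mcF = \mcE$, as the statement "$\mcE$ an exact category with enough injectives" dictates.)

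\textbf{Main obstacle.} There is no genuinely hard step; the only real work is bookkeeping — verifying that the identifications $\Modu-\mathbb{P}(\mcM) \cong \Modu-\mcM$, $\mcM-\Modu \cong \mathbb{P}(\mcM)-\Modu$ carry $\Phi, \Psi$, the tensor bifunctor, the comparison maps $\varphi$, and the Tor-groups to the stated ones, and (for part~(2)) that passing to $\mcE^{op}$ and untwisting $\otimes_{\mcM^{op}}$ versus $\otimes_{\mcM}$ produces precisely the formula $g \otimes f \mapsto g \circ f$ rather than its transpose. These are the standard Yoneda-functoriality checks already used implicitly in Lemma~\ref{extYon} and Remark~\ref{P(gen)}, so I would state them as "unwinding the definitions" and not belabor the diagram chases.
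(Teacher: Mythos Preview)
Your proposal is correct and follows exactly the route the paper intends: the corollary is stated without proof, merely flagged as an immediate consequence of Lemma~\ref{diffdescgen} and Remark~\ref{P(gen)}, and what you wrote is precisely the unwinding of those two results through the fully faithful exact embedding $\mathbb{P}$ (and its dual $\mathbb{I}$ for part~(2)). Your bookkeeping of the identifications $\Phi(\mathbb{P}(X))\cong\Phi(X)$, $\Psi(h_P)\cong\Psi(P)$, and the translation of $\varphi_{\mathbb{P}(X)}$ into the composition map $g\otimes f\mapsto g\circ f$ is exactly what is meant, and your observation that $\Hom_{\mcF}$ should be read as $\Hom_{\mcE}$ is correct.
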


\begin{thm} \label{sym}(Symmetry principle). 
Let $\mcE$ be an exact category with enough projectives $\mcP$ and enough injectives $\mcI$ and $k \geq 1$. The following two statements are equivalent: 
\begin{itemize}
    \item[(1)] $\mcP \subset \cogen^k_{\mcE} (\mcM)$ and $\Phi(I)=\Hom_{\mcE} (-,I)\lvert_{\mcM} \quad \in \modu_k-\mcM$ for every $I \in \mcI$ 
    \item[(2)] $\mcI \subset \gen_k^{\mcE} (\mcM)$ and $\Psi(P)=\Hom_{\mcE} (P,-)\lvert_{\mcM}\quad \in \mcM-\modu_k$ for every $P \in \mcP$
\end{itemize}
\end{thm}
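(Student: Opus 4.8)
The plan is to deduce Theorem~\ref{sym} directly from the Corollary of Lemma~\ref{diffdescgen} and Remark~\ref{P(gen)}: I will show that, once that Corollary is applied, statements (1) and (2) both unwind to one and the same manifestly \emph{symmetric} condition $(\star)$ on $\mcM$. Since $\mcE$ has both enough projectives $\mcP$ and enough injectives $\mcI$, we are allowed to apply part (1) of that Corollary with the projectives $\mcP$ and part (2) with $\mcF=\mcE$ and the injectives $\mcI$; in particular $\Hom_{\mcF}(-,-)$ there is just $\Hom_{\mcE}(-,-)$, and $\Phi,\Psi$ are the functors $\Phi(X)=\Hom_{\mcE}(-,X)\lvert_{\mcM}$, $\Psi(X)=\Hom_{\mcE}(X,-)\lvert_{\mcM}$ fixed just before the Corollary.

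First I would spell out statement (1). Applying part (2) of the Corollary to each $P\in\mcP$, the inclusion $\mcP\subset\cogen^k_{\mcE}(\mcM)$ is equivalent to: $\Psi(P)\in\mcM-\modu_k$ for every $P\in\mcP$, and for every $P\in\mcP$ and $I\in\mcI$ the canonical map $\Phi(I)\otimes_{\mcM}\Psi(P)\to\Hom_{\mcE}(P,I)$, $g\otimes f\mapsto g\circ f$, is an isomorphism with $\Tor^i_{\mcM}(\Phi(I),\Psi(P))=0$ for $1\le i<k$. Adjoining the remaining clause of (1) --- namely $\Phi(I)\in\modu_k-\mcM$ for all $I\in\mcI$ --- we see that (1) is equivalent to the conjunction $(\star)$ of the three families of conditions: (a) $\Psi(P)\in\mcM-\modu_k$ for all $P\in\mcP$; (b) $\Phi(I)\in\modu_k-\mcM$ for all $I\in\mcI$; (c) for all $P\in\mcP$ and $I\in\mcI$, the map $\Phi(I)\otimes_{\mcM}\Psi(P)\to\Hom_{\mcE}(P,I)$ above is an isomorphism and $\Tor^i_{\mcM}(\Phi(I),\Psi(P))=0$ for $1\le i<k$.

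Next I would run the same argument for statement (2). Applying part (1) of the Corollary to each $I\in\mcI$, the inclusion $\mcI\subset\gen_k^{\mcE}(\mcM)$ is equivalent to: $\Phi(I)\in\modu_k-\mcM$ for every $I\in\mcI$, and for every $I\in\mcI$ and $P\in\mcP$ the canonical map $\Phi(I)\otimes_{\mcM}\Psi(P)\to\Hom_{\mcE}(P,I)$, $g\otimes f\mapsto g\circ f$, is an isomorphism with $\Tor^i_{\mcM}(\Phi(I),\Psi(P))=0$ for $1\le i<k$. Adjoining the remaining clause of (2) --- namely $\Psi(P)\in\mcM-\modu_k$ for all $P\in\mcP$ --- this too becomes exactly $(\star)$. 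Hence $(1)\Leftrightarrow(\star)\Leftrightarrow(2)$, which is the theorem.

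The only point needing genuine care --- and essentially the only thing to write down --- is that the ``tensor-to-composition'' datum is literally the same in the two halves of the Corollary: in part (1) one specializes $X=I$, giving the map $\Phi(I)\otimes_{\mcM}\Psi(P)\to\Hom_{\mcE}(P,I)$ together with the vanishing of $\Tor^i_{\mcM}(\Phi(I),\Psi(P))$; in part (2) one specializes $X=P$, giving the map $\Phi(I)\otimes_{\mcM}\Psi(P)\to\Hom_{\mcE}(P,I)$ together with the vanishing of $\Tor^i_{\mcM}(\Phi(I),\Psi(P))$. In both cases the map is induced by the composition pairing $\Hom_{\mcE}(M,I)\times\Hom_{\mcE}(P,M)\to\Hom_{\mcE}(P,I)$, which does not privilege either argument, so the two conditions coincide on the nose; likewise $\Tor^i_{\mcM}$ is balanced, so its vanishing is the same condition read from either side. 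As a consistency check one may observe that statement (2) for $\mcE$ is precisely statement (1) for $\mcE^{\op}$ under $\mcP\leftrightarrow\mcI$, $\Phi\leftrightarrow\Psi$, $\mcM\leftrightarrow\mcM^{\op}$, so it would even suffice to prove the implication $(1)\Rightarrow(2)$; but the argument above yields both equivalences simultaneously. No deeper obstacle arises, the substantive work having been carried out in Lemma~\ref{diffdescgen}.
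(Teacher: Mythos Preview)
Your proposal is correct and follows essentially the same route as the paper: both arguments apply the Corollary of Lemma~\ref{diffdescgen} and Remark~\ref{P(gen)} to rewrite each of (1) and (2) as the single symmetric condition that $\Psi(P)\in\mcM\text{-}\modu_k$, $\Phi(I)\in\modu_k\text{-}\mcM$, the composition map $\Phi(I)\otimes_{\mcM}\Psi(P)\to\Hom_{\mcE}(P,I)$ is an isomorphism, and $\Tor^i_{\mcM}(\Phi(I),\Psi(P))=0$ for $1\le i<k$. Your write-up is simply a more detailed unpacking of the paper's one-paragraph proof.
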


\begin{proof}
We consider $\mathbb{P}, \mathbb{I}$ as before defined for the category $\mcE$. 
Then, it is straight forward from the previous Lemma to see that (1) and (2) are both equivalent to for all $P \in \mcP, I \in \mcI$, $\Psi(P)\in \mcM-\modu_k, \Phi(I) \in \modu_k-\mcM$ and 
 \[ 
    \Phi (I) \otimes_{\mcM}  \Psi (P)   \to \Hom_{\mcE}(P,I), \; \; g\otimes f \mapsto g \circ f
    \] 
is an isomorphism, $\Tor^i_{\mcM} (\Phi (I) ,\Psi (P) )=0 $, $1\leq i <k$.
Therefore (1) and (2) are equivalent. 
\end{proof}

\section{Acknowledgement}
The author is supported by the Alexander von Humboldt-Stiftung in the framework of the Alexander von Humboldt Professorship endowed by the Federal Ministry of Education and Research. 
\bibliographystyle{amsalpha}
\bibliography{Sauter-FBInfunctor}

\end{document}